  \providecommand*{\toclevel@author}{999}
  \providecommand*{\toclevel@title}{0}
\theoremstyle{plain}
  \newtheorem{theorem}{Theorem}
  \newtheorem{proposition}{Proposition}
  \newtheorem{corollary}{Corollary}
\theoremstyle{definition}
  \newtheorem{definition}{Definition}
\theoremstyle{remark}
  \newtheorem{remark}{Remark}
\newcommand{\supp}{\operatorname{supp}}
\newcommand{\tpmod}[1]{{\;(\operatorname{mod}\;#1)}}
\begin{document}

\title{A note on the CBC-DBD construction of lattice rules with general positive weights}

\author{Peter Kritzer}

\date{\today}

\maketitle

\begin{abstract}
    \noindent Lattice rules are among the most prominently studied quasi-Monte Carlo methods to approximate multivariate integrals.
    A rank-$1$ lattice rule to approximate an $s$-dimensional integral is fully specified by its \emph{generating vector} $\bsz \in \bbZ^s$ and its number of points~$N$.
    While there are many results on the existence of ``good'' rank-$1$ lattice rules, there are no explicit constructions of good generating vectors for dimensions $s \ge 3$. 
    This is why one usually resorts to computer search algorithms. In a recent paper by Ebert et al. in the Journal of Complexity, we showed a component-by-component digit-by-digit (CBC-DBD) 
    construction for good generating vectors of rank-1 lattice rules for integration of functions in weighted Korobov classes. However, the result in that paper was 
    limited to product weights. In the present paper, we shall generalize this result to arbitrary positive weights, thereby answering an open question posed 
    in the paper of Ebert et al. We also include a short section on how the algorithm can be implemented in the case of POD weights, by which we see that the CBC-DBD construction is competitive with the classical CBC construction.
\end{abstract}

\noindent\textbf{Keywords:} Numerical integration; lattice points; quasi-Monte Carlo methods; weighted function spaces; 
digit-by-digit construction; component-by-component construction; fast construction. 

\noindent\textbf{2020 MSC:} 65D30, 65D32, 41A55, 41A63.

\section{Introduction}

In high-dimensional numerical integration, one frequently uses \textit{quasi-Monte Carlo (QMC)} rules $Q_{N,s}$ 
to efficiently approximate integrals $I_s$, 
\begin{equation*}
	I_s(f):=
	\int_{[0,1]^s} f(\bsx) \rd\bsx
	\quad\approx\quad
	Q_{N,s}(f):=
	\frac{1}{N}\sum_{k=0}^{N-1} \, f(\bsx_k)
\end{equation*}
for a suitably chosen integrand $f$ (usually, we assume that $f$ is an element of a Hilbert or Banach space, 
see below). I.e., a QMC rule is an equal-weight quadrature rule, and it is---opposed to \textit{Monte Carlo} rules---based on 
deterministically chosen integration nodes $\bsx_0,\bsx_1,\ldots,\bsx_{N-1}\in [0,1)^s$. 
A non-trivial question in 
this field is how the nodes of a QMC rule can be chosen in order to guarantee a low integration error. Depending on 
the properties of the integrand $f$ under consideration, two classes of integration node sets have gained most attention 
in the past decades, namely \textit{digital nets and sequences}, introduced by Sobol', Faure, and Niederreiter (see, e.g., 
\cite{DP10,N92}), and \textit{lattice point sets}, introduced by Korobov and Hlawka (see, e.g., \cite{DKP22,SJ94}). In this paper, 
we focus on instances of the latter, namely so-called \textit{rank-1 lattice point sets} yielding \textit{lattice rules} when used in QMC rules. 
These are sets of integration nodes with $N$ points
\[
 \bsx_k :=
  \left( \left\{ \frac{k z_1}{N}\right\} , \ldots , \left\{ \frac{k z_s}{N}\right\} \right)
  \in [0,1)^s,  \qquad
  \text{for } k \in\{ 0, 1, \ldots, N-1\},
\]
and where $\{x\}=x-\lfloor x \rfloor$ denotes the fractional part of a real $x$.
Note that, given $N$ and $s$, the lattice rule is completely determined by the choice of the \emph{generating vector} $\bsz=(z_1,\ldots,z_s) \in \bbZ_N^s$, where $\bbZ_N := \{0, \ldots, N-1\}$. 
We remark that it is sufficient to consider the choice of $z_j$ modulo~$N$ since $\{k z_j/N\} = (k z_j \bmod{N})/N$ for integer $k$, $N$, and $z_j$. 
Obviously, not every choice of a generating vector $\bsz$ also yields a lattice rule of good quality for approximating the integral. 
For dimensions $s\le 2$, explicit constructions of good generating vectors are available, see, e.g., \cite{DKP22, N92, SJ94}, 
but there are no explicit constructions of good generating vectors known for $s>2$. Therefore, one usually studies search algorithms 
for generating vectors of good lattice rules, which are designed to make a certain error criterion sufficiently small. 
Korobov \cite{Kor63}, and later Sloan and his collaborators \cite{SR02}, introduced a \emph{component-by-component (CBC) construction}, 
which is a greedy algorithm constructing the components $z_1,\ldots,z_s$ of $\bsz \in \bbZ_N^s$ successively, choosing one $z_j$ at a time, 
and keeping previous components fixed. It was shown in \cite{K03} for prime~$N$ and in \cite{D04} for composite $N$ that the CBC construction 
yields generating vectors with essentially optimal convergence rates for particular spaces of integrands. For suitable choices of function spaces, 
there exist fast implementations of CBC constructions, which have a run-time of order $\mathcal{O} (s N \log N)$, see \cite{NC06b,NC06}.

In this short paper, we consider an alternative to the common CBC search algorithm, namely a so-called CBC-DBD algorithm, which is based on 
the idea that the single components of the generating vector $\bsz$ are chosen one after another, and each component is constructed \textit{digit-by-digit (DBD)}, 
i.e., the base-2 digits of the components are chosen in a greedy fashion, starting with the least significant digit. The principle idea of this construction is 
due to Korobov (see \cite{Kor63}), and it was shown to work for a modern function space setting in the recent paper \cite{EKNO21}. However, 
due to technical difficulties, the main result in \cite{EKNO21} does not hold in full generality, but only with certain restrictions on the \textit{weight} parameters 
of the function space involved. In the present paper, we close this gap (see below for a more detailed explanation). 

In order to make use of a construction method like the CBC or the CBC-DBD construction, one needs to define the search criterion which the algorithm 
is based on. Usually, this criterion is related to the class of integrands under consideration. In \cite{EKNO21}, we consider a Banach space (called \textit{Korobov space} 
or \textit{Korobov class}) of functions.

We consider integrands $f$ with absolutely converging Fourier series,
\[
  f(\bsx)=
  \sum_{\bsm \in \Z^s} \hat{f}(\bsm) \, \rme^{2 \pi \icomp \bsm \cdot \bsx}
  \qquad \text{with} \qquad
  \hat{f}(\bsm):=
  \int_{[0,1]^s} f(\bsx) \, \rme^{-2 \pi \icomp \bsm \cdot \bsx} \rd \bsx ,
\]
i.e., $\hat{f}(\bsm)$ is the $\bsm$-th Fourier coefficient of $f$, where $\bsm \cdot \bsx := \sum_{j=1}^s m_j x_j$ is the vector dot product.
Since the Fourier series are absolutely summable, the Fourier series are pointwise convergent, $1$-periodic, and continuous.
Here, we will consider Banach spaces which are based on assuming sufficient decay of the Fourier coefficients of its elements to guarantee certain smoothness. 
These spaces will be denoted by $E_{s,\bsgamma}^{\alpha}$, where $s$ denotes the number of variables the functions depend on, $\alpha>1$ 
is a real number frequently referred to as the smoothness parameter, and $\bsgamma = \{\gamma_\setu\}_{\setu \subset \bbN}$, 
is a set of strictly positive \textit{weights} to model the importance of different subsets of components. In this context, 
by the notation ``$\setu \subset \bbN$'' we mean all finite subsets $\setu$ of the positive integers. We extend the range of $\setu$ 
to all finite subsets of $\bbN$ since we also would like to include results that hold asymptotically when $s$ tends to infinity.

Intuitively, a large $\gamma_{\setu}$ corresponds to a high influence of the variables $x_j$ with $j\in\setu$, 
while a small $\gamma_{\setu}$ means low influence. The idea of weights goes back to Sloan and Wo\'{z}niakowski \cite{SW98},
and will be made more precise by incorporating the weights in the norm of the space $E_{s,\bsgamma}^{\alpha}$ below. 
We are interested in conditions on the weights such that we can bound the integration error independently of $s$.
This is called \emph{strong polynomial tractability}, see, e.g., \cite{NW08}, for a general reference.

In the literature on QMC methods, the weights $\bsgamma$ are not always chosen as fully general. Indeed, a very common variant is 
to work with an infinite sequence $(\gamma_j)_{j\ge 1}$ and to put $\gamma_\setu:=\prod_{j\in\setu} \gamma_j$, which is the case of \textit{product weights}. 
Other variants are, e.g., finite-order weights or product-and-order-dependent (POD) weights, and we refer the reader to \cite{DKS13} for a more detailed discussion. 
The main result in \cite{EKNO21} is shown for the special case of product weights, due to one step in the proof that we were not able to carry out for more general weights. 
In the present paper, we close this gap, and show a corresponding result for general weights $\bsgamma$. The only assumption we make is that 
all $\gamma_\setu$ are positive, which is to avoid too much technical notation. Presumably, a similar result also holds for weights that are allowed to be zero.

Now, for a given smoothness parameter $\alpha > 1$ and strictly positive weights $\{\gamma_\setu\}_{\setu \subset  \bbN}$, we define, 
for any $\bsm=(m_1,\ldots,m_s) \in \Z^s$,
\[
	r_{\alpha,\bsgamma}(\bsm)
	:=
	\gamma_{\supp(\bsm)}^{-1} \prod_{j\in\supp(\bsm)} \abs{m_j}^\alpha ,
\]
where $\supp(\bsm) := \{ j \in \{1,\ldots, s\}: m_j \ne 0 \}$ is the support of $\bsm$. We set $\gamma_\emptyset = 1$, so $r_{\alpha,\bsgamma}(\bszero) = 1$. 
Using this notation, we define the norm of our Banach space $E_{s,\bsgamma}^{\alpha}$,
\begin{equation}\label{eq:norm}
	\|f\|_{E_{s,\bsgamma}^{\alpha}} := 
	\sup_{\bsm \in \Z^s} |\hat{f}(\bsm)| \, r_{\alpha,\bsgamma}(\bsm), \quad \mbox{for}\quad f\in E_{s,\bsgamma}^{\alpha},
\end{equation}
and our weighted function space by
\[
	E_{s,\bsgamma}^{\alpha} :=
	\left\{f \in L^2([0,1]^s) \colon \|f\|_{E_{s,\bsgamma}^{\alpha}}  < \infty \right\}.
\]
The criterion by which we assess the quality of a given rank-1 lattice rule is the \textit{worst-case error}, which is defined as
\[
 e_{N,s,\alpha,\bsgamma}(\bsz):=\sup_{\substack{f\in  E_{s,\bsgamma}^{\alpha}\\ \|f\|_{E_{s,\bsgamma}^{\alpha} \le 1}}}
 \abs{I_s(f) - Q_{N,s}(f,\bsz)},
\]
where we stress the dependence on $\bsz$ in our notation of the error and of the QMC rule $Q_{N,s}$. 
\begin{remark}
Note that since $\alpha>1$, the membership of $f$ to the space $E_{s,\bsgamma}^{\alpha}$ implies the absolute convergence of its Fourier series, 
which in turn entails that $f$ is continuous and $1$-periodic with respect to each variable. In addition, if $f \in E_{s,\bsgamma}^{\alpha}$, $f$
has $1$-periodic continuous mixed partial derivatives $f^{(\bstau)}$ for any $\bstau \in \N_0^s$ with all $\tau_j < \alpha -1$. Furthermore, 
it is known that the optimal convergence rate of the worst-case error in the function space is of order $\mathcal{O}(N^{-\alpha})$. We also remark 
that, by slightly modifying the definition of the norm in \eqref{eq:norm}, one could define a function space that is similar to $E_{s,\bsgamma}^{\alpha}$, 
but a Hilbert space. The latter is frequently studied in the literature on lattice rules (see, e.g., \cite{DKP22}), and the worst-case error in 
$E_{s,\bsgamma}^{\alpha}$ is exactly the square of the worst-case error in the Hilbert case. For consistency with \cite{EKNO21}, we stay with $E_{s,\bsgamma}^{\alpha}$ 
in this note, and refer to that paper for further information on the properties of the function space, as well as for further references. 
\end{remark}

We write $\N := \{1, 2, \ldots\}$ for the set of natural numbers and $\N_0 := \{0,1,2,\ldots\}$, $\bbZ$ for the set of integers and $\Z_N := \{0, \ldots, N-1\}$. 
To denote sets of components we use fraktur font, e.g., $\setu \subset \N$.
As a shorthand we write $\{k_1 : k_2\}$ for the set $\{k_1, k_1+1,\ldots,k_2\}$, 
for two integers $k_1,k_2$ with $k_1\le k_2$. 
To denote the projection of a vector $\bsx \in [0,1)^s$ or $\bsm \in \bbZ^s$ onto the components 
in a set $\setu \subseteq \{1 \mcol s\}$ we write $\bsx_\setu := (x_j)_{j\in\setu}$ or $\bsm_\setu := (m_j)_{j\in\setu}$, respectively.

\section{The CBC-DBD construction yields optimal convergence rates for general weights}

As outlined above, the paper \cite{EKNO21} shows a result for lattice rules with generating vectors obtained by a CBC-DBD algorithm, which implies 
that the lattice rules achieve a convergence order in $E_{s,\bsgamma}^{\alpha}$ that can be arbitrarily close to the optimal rate $N^{-\alpha}$. 
Furthermore, the error bound can be made independent of the dimension if the weights $\bsgamma$ satisfy suitable conditions. While the results 
in \cite{EKNO21} are limited to product weights, we show their generalization to arbitrary positive weights here. To be more precise, several auxiliary 
results in our previous paper hold for general weights, and only some of the proofs there require the assumption of product weights. Here, we will mostly 
highlight those passages where there is a significant difference to what is discussed in \cite{EKNO21}. 
For consistency, we shall use notation that is as similar as possible to that in \cite{EKNO21}, with only few minor adaptions. 

A keystone in the paper \cite{EKNO21} was the following proposition, which implies that in analyzing the worst-case error $e_{N,s,\alpha,\bsgamma}(\bsz)$ it is sufficient 
to consider only a truncated variant of the error expression, namely
\begin{equation}\label{eq:def_Talpha}
	T_{\alpha,\bsgamma}(N,\bsz) 
	:= 
	\sum_{\substack{\bsm \in M_{N,s}\setminus \{\bszero\}\\ \bsm\cdot\bsz\equiv 0 \tpmod{N}}} \frac{1}{r_{\alpha,\bsgamma}(\bsm)}, 
	\quad \mbox{where}\quad M_{N,s}:=\{-(N-1),\ldots,N-1\}^s.
\end{equation}      
Indeed, we have
\begin{proposition}{\cite[Proposition 1]{EKNO21}}\label{prop:trunc_error}
 Let $\bsgamma = \{\gamma_{\setu}\}_{\setu \subset \bbN}$ be positive weights and let 
 $\bsz = (z_1,\ldots,z_s) \in \Z^s$ with $\gcd(z_j,N)=1$ for all $j\in\{1,\ldots,s\}$. Then, for $\alpha>1$, we have that
\[
  e_{N,s,\alpha,\bsgamma}(\bsz) - T_{\alpha,\bsgamma} (N, \bsz) \le
  \frac{1}{N^\alpha} \sum_{\emptyset\neq \setu \subseteq \{1 \mcol s\}} \gamma_\setu \, (4\zeta (\alpha))^{\abs{\setu}},
\]
where $\zeta(\cdot)$ denotes the Riemann zeta function.
\end{proposition}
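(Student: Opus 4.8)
The plan is to begin from the closed-form expression for the worst-case error. Membership $f\in E_{s,\bsgamma}^{\alpha}$ with $\|f\|_{E_{s,\bsgamma}^{\alpha}}\le 1$ means precisely that $|\hat f(\bsm)|\le 1/r_{\alpha,\bsgamma}(\bsm)$ for every $\bsm\in\Z^s$, while for a rank-$1$ lattice rule the character-sum identity $\frac1N\sum_{k=0}^{N-1}\rme^{2\pi \icomp k (\bsm\cdot\bsz)/N}$, which equals $1$ if $\bsm\cdot\bsz\equiv 0\tpmod{N}$ and $0$ otherwise, yields $Q_{N,s}(f,\bsz)=\sum_{\bsm\cdot\bsz\equiv 0\,(N)}\hat f(\bsm)$. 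Taking absolute values, bounding $|\hat f(\bsm)|\le 1/r_{\alpha,\bsgamma}(\bsm)$ term by term, and checking that equality is attained by a suitable real-valued $f$, one obtains
\[
 e_{N,s,\alpha,\bsgamma}(\bsz)=\sum_{\substack{\bsm\in\Z^s\setminus\{\bszero\}\\ \bsm\cdot\bsz\equiv 0\tpmod{N}}}\frac{1}{r_{\alpha,\bsgamma}(\bsm)}\,,
\]
which is also recorded in \cite{EKNO21}. I would then split this sum according to whether $\bsm\in M_{N,s}$: the part with $\bsm\in M_{N,s}\setminus\{\bszero\}$ is by definition $T_{\alpha,\bsgamma}(N,\bsz)$, so only the tail $\sum_{\bsm\in\Z^s\setminus M_{N,s},\ \bsm\cdot\bsz\equiv 0\,(N)}1/r_{\alpha,\bsgamma}(\bsm)$ remains to be bounded by the right-hand side.

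For the tail I would group the summands by their support $\setu=\supp(\bsm)$. Writing $r_{\alpha,\bsgamma}(\bsm)^{-1}=\gamma_{\setu}\prod_{j\in\setu}|m_j|^{-\alpha}$ pulls the weight $\gamma_{\setu}$ out in front of the corresponding block --- this is essentially the only place where general (rather than product) weights would enter, and it causes no difficulty --- and for $\bsm$ with support $\setu$ the condition $\bsm\notin M_{N,s}$ says exactly that $|m_j|\ge N$ for some $j\in\setu$. A union bound over the index $j^\ast\in\setu$ realizing this reduces matters to estimating, for each fixed $j^\ast\in\setu$,
\[
 \sum_{\substack{(m_j)_{j\in\setu}\in(\Z\setminus\{0\})^{\setu}\\ |m_{j^\ast}|\ge N,\ \sum_{j\in\setu}m_jz_j\equiv 0\,(N)}}\ \prod_{j\in\setu}\frac{1}{|m_j|^{\alpha}}\,.
\]
Summing first over the free coordinates $(m_j)_{j\in\setu\setminus\{j^\ast\}}$, each contributes a factor $\sum_{m\in\Z\setminus\{0\}}|m|^{-\alpha}=2\zeta(\alpha)$; for each such choice the congruence, together with $\gcd(z_{j^\ast},N)=1$ (so that $z_{j^\ast}$ is invertible modulo $N$), confines $m_{j^\ast}$ to a single residue class $c$ modulo $N$, and the elementary bound $\sum_{m\equiv c\,(N),\,|m|\ge N}|m|^{-\alpha}\le 2\zeta(\alpha)\,N^{-\alpha}$ (one representative per length-$N$ window on either side of the origin, uniformly in $c$) handles $m_{j^\ast}$. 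Hence each $j^\ast$-term is at most $(2\zeta(\alpha))^{|\setu|}N^{-\alpha}$; summing over the $|\setu|$ choices of $j^\ast$ and using $|\setu|\le 2^{|\setu|}$ bounds the $\setu$-block by $\gamma_{\setu}(4\zeta(\alpha))^{|\setu|}N^{-\alpha}$. Summing over all $\emptyset\ne\setu\subseteq\{1 \mcol s\}$ gives the claim (trivially so when the right-hand side is $+\infty$).

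None of these steps is genuinely hard: the error identity is standard, and the tail estimate is an elementary summation. The one point requiring care is the bookkeeping in the tail --- reducing ``$\bsm\notin M_{N,s}$'' to a single large coordinate and verifying that $\gcd(z_{j^\ast},N)=1$ is exactly what makes the congruence pin $m_{j^\ast}$ to one residue class, so that its contribution is $O(N^{-\alpha})$. It is also worth double-checking the constant: the factor $4\zeta(\alpha)$ instead of $2\zeta(\alpha)$ is precisely the cost of the union bound over $j^\ast$, absorbed via $|\setu|\le 2^{|\setu|}$.
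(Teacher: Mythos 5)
Your proof is correct and is essentially the standard argument: the paper itself defers the proof of Proposition~\ref{prop:trunc_error} to \cite{EKNO21}, where the same route is taken --- the closed-form dual-lattice expression for the worst-case error, the split at $M_{N,s}$, and an elementary tail estimate using $\gcd(z_{j^\ast},N)=1$ to pin the large coordinate to one residue class per window. Your bookkeeping of the constant ($|\setu|(2\zeta(\alpha))^{|\setu|}\le(4\zeta(\alpha))^{|\setu|}$) also matches how the factor $4\zeta(\alpha)$ arises there.
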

\begin{proof}
 We refer to \cite{EKNO21} for a proof. 
\end{proof}
Note that one can also define the quantity $T_{\alpha,\bsgamma}(N,\bsz)$ by replacing $\alpha$ by 1 in \eqref{eq:def_Talpha} and making the obvious adaptions. 
Regarding $T_{1,\bsgamma}(N,\bsz)$, the following estimate was shown in \cite{EKNO21} for the case when $N$ is a power of 2.
\begin{theorem}{\cite[Theorem 2]{EKNO21}}\label{thm:T_target_CBCDBD}
Let $N=2^n$, with $n\ge1$, and let $\bsgamma=\{\gamma_\setu\}_{\setu \subset \bbN}$ be positive weights. 
Furthermore, let $\bsz = (z_1, \ldots, z_s)\in\{1,\ldots,N-1\}^s$
with $\gcd (z_j,N)=1$ for $1\le j\le s$. Then,
\begin{eqnarray*}
 T_{1,\bsgamma}(N,\bsz) &\le&
 \sum_{\emptyset \ne \setu \subseteq \{1 \mcol s\}} \frac{\gamma_\setu}{N} (\log 4 + 2(1 + \log N))^{\abs{\setu}} -
 \sum_{\emptyset \ne \setu \subseteq \{1 \mcol s\}} \gamma_\setu \, (\log 4)^{|\setu|} \\
 &&+
 \sum_{\emptyset \ne \setu \subseteq \{1 \mcol s\}} \frac{\gamma_\setu}{N} \, 2 |\setu| \, \left(1 + 2 \log N\right)^{|\setu|} \, (1+\log N) + \frac{1}{N}\, H_{s,n,\bsgamma} (\bsz),
\end{eqnarray*}
where 
\[
  H_{s,n,\bsgamma}(\bsz) :=
  \sum_{k=1}^{2^n-1}
  \sum_{\emptyset \ne \setu \subseteq \{1 \mcol s\}} \gamma_\setu
  \prod_{j\in\setu} \log \left( \frac1{\sin^{2}(\pi k z_j / 2^n)} \right).
\]
\end{theorem}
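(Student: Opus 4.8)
The plan is to compute $T_{1,\bsgamma}(N,\bsz)$ by finite Fourier analysis and then to estimate the resulting exponential sum with some care. Since $1/r_{1,\bsgamma}(\bsm)=\gamma_{\supp(\bsm)}\prod_{j\in\supp(\bsm)}|m_j|^{-1}$, I would first group the vectors $\bsm\in M_{N,s}\setminus\{\bszero\}$ according to their support $\setu=\supp(\bsm)$ and encode the congruence condition by the orthogonality average $\tfrac1N\sum_{k=0}^{N-1}\rme^{2\pi\icomp k(\bsm\cdot\bsz)/N}$, which vanishes unless $\bsm\cdot\bsz\equiv0\tpmod N$. As the inner sums over $m_j\in\{-(N-1),\dots,-1,1,\dots,N-1\}$ then factorise, one obtains
\[
  T_{1,\bsgamma}(N,\bsz)
  =\frac1N\sum_{k=0}^{N-1}\ \sum_{\emptyset\ne\setu\subseteq\{1\mcol s\}}\gamma_\setu\prod_{j\in\setu}\varphi_N\!\Bigl(\frac{kz_j}{N}\Bigr),
  \qquad
  \varphi_N(\theta):=2\sum_{m=1}^{N-1}\frac{\cos(2\pi m\theta)}{m}.
\]
The term $k=0$ equals $\tfrac1N\sum_\setu\gamma_\setu\,\varphi_N(0)^{|\setu|}$ with $\varphi_N(0)=2\sum_{m=1}^{N-1}m^{-1}\le2(1+\log N)$, and it will be merged with the analysis of the remaining $k$.

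For $1\le k\le N-1$ the hypothesis $\gcd(z_j,N)=1$ forces $kz_j\not\equiv0\tpmod N$, so $kz_j/N$ lies at distance at least $1/N$ from $\Z$ and $|\sin(\pi kz_j/N)|\ge\sin(\pi/N)\ge2/N$. Using the classical identity $\sum_{m\ge1}m^{-1}\cos(2\pi m\theta)=-\log|2\sin\pi\theta|$ together with an Abel-summation estimate for the tail $\sum_{m\ge N}m^{-1}\cos(2\pi m\theta)$ (the cosine partial sums being bounded by $1/|\sin\pi\theta|$), one gets the pointwise representation
\[
  \varphi_N\!\Bigl(\frac{kz_j}{N}\Bigr)=\log\frac{1}{4\sin^2(\pi kz_j/N)}+e_{k,j},
  \qquad |e_{k,j}|\le\frac{2}{N\sin(\pi/N)}\le1 ,
\]
and, crucially, $\sum_{k=1}^{N-1}|e_{k,j}|\le\tfrac2N\sum_{r=1}^{N-1}(\sin(\pi r/N))^{-1}\le2(1+\log N)$, using that $k\mapsto kz_j$ permutes $\{1\mcol N-1\}$ modulo $N$ and that $\sin(\pi r/N)\ge2r/N$ for $r\le N/2$. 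Expanding $\prod_{j\in\setu}\varphi_N(kz_j/N)=\prod_{j\in\setu}(\ell_{k,j}+e_{k,j})$, with $\ell_{k,j}:=\log\frac1{4\sin^2(\pi kz_j/N)}$, splits off a ``logarithmic'' part $\prod_{j\in\setu}\ell_{k,j}$ and a collection of terms containing at least one factor $e_{k,j}$; the latter, summed over $k$, are controlled by $\sum_{k=1}^{N-1}|e_{k,j}|\le2(1+\log N)$ and the single-factor bound $\ell_{k,j}\le1+2\log N$ (a mean-value estimate collapsing the binomial sum over which coordinates carry an $e$-factor), and they account for the term $\tfrac{\gamma_\setu}{N}\,2|\setu|(1+2\log N)^{|\setu|}(1+\log N)$ in the asserted inequality.

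It then remains to treat $\tfrac1N\sum_{k=1}^{N-1}\sum_\setu\gamma_\setu\prod_{j\in\setu}\ell_{k,j}$ and to combine it with the $k=0$ term. Writing $\ell_{k,j}=\beta_{k,j}-\log4$, where $\beta_{k,j}:=\log\frac1{\sin^2(\pi kz_j/N)}\ge0$, and expanding the product as a signed sum of symmetric functions of the $\beta_{k,j}$ with coefficients $(-\log4)^i$, the top-degree term is $\prod_{j\in\setu}\beta_{k,j}=\prod_{j\in\setu}\log\frac1{\sin^2(\pi kz_j/2^n)}$, whose contribution (summed against $\gamma_\setu/N$ over $k$ and $\setu$) is precisely $\tfrac1N H_{s,n,\bsgamma}(\bsz)$. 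To bound the remaining symmetric functions, summed over $k$, one uses the single-factor bound $\beta_{k,j}\le1+2\log N$ together with the exact evaluation
\[
  \sum_{k=1}^{N-1}\log\frac{1}{\sin^2(\pi k/N)}=2(N-1)\log2-2\log N ,
\]
which follows from $\prod_{k=1}^{N-1}\sin(\pi k/N)=N/2^{N-1}$ and the permutation property; adding the $k=0$ term and rearranging the binomial sums that arise produces the leading expression $\tfrac{\gamma_\setu}{N}(\log4+2(1+\log N))^{|\setu|}$ together with the correction $-\gamma_\setu(\log4)^{|\setu|}$.

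The step I expect to be the main obstacle is the bookkeeping of signs. Both $\varphi_N$ and $\ell_{k,j}$ change sign as $k$ varies, so one must not pass to absolute values inside the logarithmic part: that would replace $\sum_{k=1}^{N-1}\prod_{j\in\setu}\ell_{k,j}$, which is of size $O(N)$, by a quantity of size $O\bigl(N(\log N)^{|\setu|-1}\bigr)$ and break the bound. One instead keeps the signed expansion in powers of $\log4$ and exploits that its dominant deviation from $\prod_{j\in\setu}\beta_{k,j}$, namely the degree-one term $-\log4\sum_{j\in\setu}\prod_{j'\in\setu\setminus\{j\}}\beta_{k,j'}$ summed over $k$, is negative; it is this negativity, combined with the precise value of $\sum_{k=1}^{N-1}\beta_{k,j}$, that produces the correction term of exactly the form $-\gamma_\setu(\log4)^{|\setu|}$. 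Finally, the hypothesis $N=2^n$ is not essential for this particular estimate (the identity $\prod_{k=1}^{N-1}\sin(\pi k/N)=N/2^{N-1}$ holds for every $N$, and only $\gcd(z_j,N)=1$ is really used); it is retained for consistency with the CBC--DBD setting.
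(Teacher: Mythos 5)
This theorem is only quoted here from \cite[Theorem~2]{EKNO21}; the note contains no proof of it, so I can only measure your attempt against the argument it would need. Your overall architecture (character-sum representation, the splitting $\varphi_N(kz_j/N)=\ell_{k,j}+e_{k,j}$, the bound $\sum_k|e_{k,j}|\le 2(1+\log N)$ via the permutation property, and the telescoping that produces the term $2|\setu|(1+2\log N)^{|\setu|}(1+\log N)/N$) is sound and matches the shape of the stated bound. The gap is in the step you yourself flag as the main obstacle: the signed expansion of $\prod_{j\in\setu}(\beta_{k,j}-\log 4)$. Your mechanism --- negativity of the codimension-one term plus the exact value of $\sum_{k}\beta_{k,j}$ --- closes the argument only for $|\setu|\le 2$. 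For $|\setu|\ge 3$ the expansion contains mixed sums $\sum_{k}\prod_{j\in\setv}\beta_{k,j}$ with $2\le|\setv|\le|\setu|-1$ appearing with \emph{both} signs, and these are not determined by the data you use ($\beta_{k,j}\ge 0$, $\beta_{k,j}\le 1+2\log N$, and the column sums). Concretely, take $d=3$, $b=\log 4$, and an abstract array with $\beta_{k,1}=\beta_{k,2}=2b\,\bbone_A(k)$, $\beta_{k,3}=2b\,\bbone_{A^c}(k)$ for a set $A$ of about half the indices: all your constraints hold, yet
\[
\sum_{k}\Bigl[\prod_{j}(\beta_{k,j}-b)-\prod_{j}\beta_{k,j}\Bigr]=0\not\le -Nb^{3}+O\bigl((\log N)^{4}\bigr),
\]
so the inequality you need cannot follow from the properties you invoke. (Such an array does not arise from actual values $\log(1/\sin^{2}(\pi k z_j/N))$, but that is precisely the point: extra structure must be used.)

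The missing idea is to keep the constant $\log 4$ \emph{inside} the factors rather than expanding it out. Set $g_N(x):=\log 4+\sum_{0<|m|<N}\rme^{2\pi\icomp mx}/|m|$, whose Fourier coefficients are $\log 4$ at $m=0$ and $1/|m|$ otherwise, hence all non-negative. Then
\[
\frac1N\sum_{k=0}^{N-1}\prod_{j\in\setu}g_N(kz_j/N)
=\sum_{\substack{\bsm_\setu\mcol |m_j|<N\\ \bsm_\setu\cdot\bsz_\setu\equiv 0\tpmod N}}\prod_{j\in\setu}\widehat{g_N}(m_j)
=\sum_{\setw\subseteq\setu}(\log 4)^{|\setu\setminus\setw|}\,T_\setw
\ \ge\ (\log 4)^{|\setu|}+T_\setu,
\]
because every term $T_\setw$ is a sum of positive quantities. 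Discarding the intermediate terms gives $T_\setu\le \frac1N\sum_{k}\prod_j g_N(kz_j/N)-(\log 4)^{|\setu|}$ with no sign problem; one then isolates $k=0$ (yielding $(\log 4+2(1+\log N))^{|\setu|}/N$), uses $\prod_j g_N(kz_j/N)\le\prod_j|g_N(kz_j/N)|\le\prod_j(\beta_{k,j}+|e_{k,j}|)$ for $k\ne 0$, and your telescoping then delivers $H_{s,n,\bsgamma}(\bsz)/N$ plus the stated error term. So the correction $-\gamma_\setu(\log 4)^{|\setu|}$ comes from positive definiteness (discarding non-negative lower-order $T_\setw$'s), not from the exact evaluation of $\sum_k\log(1/\sin^{2}(\pi k/N))$; without this ingredient your proof does not go through for $|\setu|\ge 3$.
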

Based on the quantity $H_{s,n,\bsgamma}(\bsz)$, an averaging argument was used in \cite{EKNO21} to obtain a quality function which serves 
as the relevant criterion in the CBC-DBD algorithm. We state its definition 
here for completeness. 
\begin{definition}[Digit-wise quality function] \label{def:h_rv}
	Let $x \in \N$ be an odd integer, let $n, s \in \N$ be positive integers, and let $\bsgamma=\{\gamma_\setu\}_{\setu \subset \bbN}$
	be positive weights. For $1 \le v \le n$ and $1 \le r \le s$, and odd integers $z_1,\ldots,z_{r-1} \in \Z$, we define the 
	quality function $h_{r,n,v,\bsgamma}$ for odd integer $x$ as
	\begin{align*} 
		h_{r,n,v,\bsgamma}(x) 
		&:=
		\sum_{t=v}^{n} \frac{1}{2^{t-v}} \sum_{\substack{k=1 \\ k \equiv 1 \tpmod{2}}}^{2^t-1} 
		\left[ \sum_{\emptyset \ne \setu\subseteq \{1 \mcol r-1\}} \gamma_{\setu} 
		\prod_{j\in\setu} \log \left( \frac{1}{\sin^2(\pi k z_j / 2^t)} \right) \right.\nonumber\\ 
		&\qquad\qquad\qquad\qquad\qquad+
		\left.\sum_{\setu\subseteq \{1 \mcol r-1\}} \!\!\!\! \gamma_{\setu\cup \{r\}} 
		\log \left( \frac{1}{\sin^2(\pi k x / 2^v)} \right) \prod_{j\in\setu} \log \left( \frac{1}{\sin^2(\pi k z_j / 2^t)}\right) \right]
		.
	\end{align*} 
\end{definition}
Note that while the quantity $h_{r,n,v,\bsgamma}$ depends on the integers $z_1,\ldots,z_{r-1}$, this dependency is not explicitly visible in our notation. 
Nevertheless, in the following these integers will always be the components of the generating vector which have been selected in the previous steps of our algorithm. 
Based on $h_{r,n,v,\bsgamma}$ the component-by-component digit-by-digit (CBC-DBD) algorithm is formulated as follows in \cite{EKNO21}.
\begin{algorithm}[H] 
	\caption{Component-by-component digit-by-digit construction, \cite[Algorithm 1]{EKNO21}}	
	\label{alg:cbc-dbd}
	\vspace{5pt}
	\textbf{Input:} Integer $n \in \N$, dimension $s$, and positive weights $\bsgamma=\{\gamma_\setu\}_{\setu \subset \bbN}$. \\
	\vspace{-7pt}
		\begin{algorithmic}
			\STATE Set $z_{1,n} = 1$ and $z_{2,1} = \cdots = z_{s,1} = 1$.
			\vspace{5pt}
			\FOR{$r=2$ \TO $s$}
			\FOR{$v=2$ \TO $n$}
			\STATE $z^{\ast} = \underset{z \in \{0,1\}}{\argmin} \; h_{r,n,v,\bsgamma}(z_{r,v-1} + z \, 2^{v-1})$
			\STATE $z_{r,v} = z_{r,v-1} + z^{\ast} \, 2^{v-1}$
			\ENDFOR
			\STATE Set $z_r := z_{r,n}$.
			\ENDFOR
			\vspace{5pt}
			\STATE Set $\bsz = (z_1,\ldots,z_s)$.
		\end{algorithmic}
	\vspace{5pt}
	\textbf{Return:} Generating vector $\bsz = (z_1,\ldots,z_s)$ for $N=2^n$.
\end{algorithm}

In the analysis of the worst-case error of rank-1 lattice rules generated by vectors $\bsz$ obtained 
from Algorithm \ref{alg:cbc-dbd},the following theorem was shown in \cite{EKNO21}.
\begin{theorem}{\cite[Theorem 3]{EKNO21}} \label{theorem:H-induction}
Let $n, s \in \N$, $N=2^n$, and let $\bsgamma=\{\gamma_\setu\}_{\setu \subset \bbN}$ be positive weights with $\gamma_\emptyset=1$. 
Furthermore, let the generating vector $\bsz \in \Z_N^s$ be constructed by Algorithm \ref{alg:cbc-dbd}. Then, 
\begin{equation} \label{eq:estimate-H-induction}
 H_{s,n,\bsgamma}(\bsz) \le 
 H_{s-1,n,\bsgamma}(\bsz_{\{1 \mcol s-1\}}) + (\log 4) \left[ \gamma_{\{s\}} N + H_{s-1,n,\bsgamma \cup \{s\}}(\bsz_{\{1 \mcol s-1\}}) \right],
\end{equation}
where $\bsgamma \cup \setv$ is the collection of weights $\{\gamma_{\setu \cup \setv}\}_{\setu \subset \bbN}$ for a finite $\setv\subset\bbN$. 
\end{theorem}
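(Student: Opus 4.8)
The plan is to split $H_{s,n,\bsgamma}(\bsz)$ into the part indexed by the nonempty subsets $\setu\subseteq\{1\mcol s-1\}$, which is exactly $H_{s-1,n,\bsgamma}(\bsz_{\{1\mcol s-1\}})$, and the part indexed by the subsets containing $s$, which I will call
\[
 \Psi_s := \sum_{k=1}^{N-1}\ \sum_{\setu\subseteq\{1\mcol s-1\}}\gamma_{\setu\cup\{s\}}\,\log\!\left(\frac{1}{\sin^{2}(\pi k z_s/N)}\right)\prod_{j\in\setu}\log\!\left(\frac{1}{\sin^{2}(\pi k z_j/N)}\right),
\]
and then to prove that $\Psi_s\le(\log 4)\big[\gamma_{\{s\}}N+H_{s-1,n,\bsgamma\cup\{s\}}(\bsz_{\{1\mcol s-1\}})\big]$, which together with the splitting gives~\eqref{eq:estimate-H-induction}. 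The first reduction I would make is a $2$-adic splitting of the index: writing $k=2^{n-t}k'$ with $k'$ odd and $1\le k'\le 2^{t}-1$ sets up a bijection between $\{1,\dots,N-1\}$ and the pairs $(t,k')$, $1\le t\le n$, under which $\pi k z_j/N=\pi k' z_j/2^{t}$. This yields $\Psi_s=\sum_{t=1}^{n}\Psi_{s,t}$, where $\Psi_{s,t}$ is the same sum with $N$ replaced by $2^{t}$ and $k$ restricted to odd residues; note $\Psi_{s,1}=0$ because $z_s$ is odd.

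The heart of the matter is to tie the $\Psi_{s,t}$ to the digit-wise quality function. Fix $r=s$ and let $A_{v}(x)$ denote the part of $h_{s,n,v,\bsgamma}(x)$ that actually depends on $x$, i.e.\ the terms carrying $\log(1/\sin^{2}(\pi k x/2^{v}))$ in Definition~\ref{def:h_rv}; the remaining terms are independent of $x$, so the choice $z^{\ast}$ in step $(s,v)$ of Algorithm~\ref{alg:cbc-dbd} minimizes $A_{v}(z_{s,v-1}+z\,2^{v-1})$ over $z\in\{0,1\}$, whence $A_{v}(z_{s,v})\le\tfrac12\big(A_{v}(z_{s,v-1})+A_{v}(z_{s,v-1}+2^{v-1})\big)$. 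The only $x$-dependent factor is $\log(1/\sin^{2}(\pi k x/2^{v}))$, and for odd $k$ the double-angle identity $\sin(2\theta)=2\sin\theta\cos\theta$ gives
\[
 \tfrac12\left[\log\frac{1}{\sin^{2}(\pi k z_{s,v-1}/2^{v})}+\log\frac{1}{\sin^{2}(\pi k(z_{s,v-1}+2^{v-1})/2^{v})}\right]=\log 2+\tfrac12\log\frac{1}{\sin^{2}(\pi k z_{s,v-1}/2^{v-1})}.
\]
Inserting this into the average and using $z_{s,v-1}\equiv z_s\pmod{2^{v-1}}$, the average becomes $(\log 2)\,C_{s,v}+A_{v-1}(z_{s,v-1})-\Psi_{s,v-1}$, where $C_{s,v}$ is the weight-only quantity obtained from $A_{v}$ by deleting the factor $\log(1/\sin^{2}(\pi k x/2^{v}))$: indeed $A_{v-1}(z_{s,v-1})$ decomposes as $\Psi_{s,v-1}$ (its $t=v-1$ term, by $z_{s,v-1}\equiv z_s\pmod{2^{v-1}}$) plus half of the $t\ge v$ tail that the identity contributes to the average, the halving being forced by matching the $2^{-(t-v)}$ coefficients. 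This gives the telescoping inequality
\[
 \Psi_{s,v-1}+A_{v}(z_{s,v})\ \le\ (\log 2)\,C_{s,v}+A_{v-1}(z_{s,v-1}),\qquad 2\le v\le n.
\]

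Summing over $v=2,\dots,n$ and using that $A_{n}(z_{s,n})=\Psi_{s,n}$ and $A_{1}(z_{s,1})=A_{1}(1)=0$ (since $\log(1/\sin^{2}(\pi k/2))=0$ for odd $k$), everything telescopes to $\Psi_s\le(\log 2)\sum_{v=2}^{n}C_{s,v}$. To finish I would interchange the $v$- and $t$-summations in $\sum_{v}C_{s,v}$, bound the inner geometric sum $\sum_{v=2}^{t}2^{-(t-v)}<2$, and separate the $\setu=\emptyset$ term of the set-sum, which contributes $\gamma_{\{s\}}\sum_{t=2}^{n}2^{t-1}<\gamma_{\{s\}}N$, from the $\setu\neq\emptyset$ terms, which reassemble, by running the $2$-adic bijection backwards, into exactly $H_{s-1,n,\bsgamma\cup\{s\}}(\bsz_{\{1\mcol s-1\}})$. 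This gives $\Psi_s<(\log 4)\big[\gamma_{\{s\}}N+H_{s-1,n,\bsgamma\cup\{s\}}(\bsz_{\{1\mcol s-1\}})\big]$, which is~\eqref{eq:estimate-H-induction}.

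The step I expect to be the real obstacle, and the one where having to work with general positive weights rather than product weights bites, is the identity $A_{v-1}(z_{s,v-1})=\Psi_{s,v-1}+\tfrac12(\text{averaged tail})$ together with the final reassembly of the level-$t$ pieces into an $H$-quantity. In \cite{EKNO21} the product structure collapses $\sum_{\setu\subseteq\{1\mcol r-1\}}\gamma_{\setu\cup\{r\}}\prod_{j\in\setu}(\cdots)$ into $\gamma_{r}\prod_{j=1}^{r-1}(1+\gamma_{j}(\cdots))$, so both the averaging bookkeeping and the recombination become short closed-form computations. For a general family $\{\gamma_{\setu}\}$ no such factorization exists, so one must carry the collection $\bsgamma\cup\{s\}$ symbolically and verify, uniformly in $\setu$, that the quality function splits as claimed into an $x$-free and an $x$-dependent part, that the double-angle substitution produces the shifted quantity $A_{v-1}(z_{s,v-1})$ up to the extracted slice $\Psi_{s,v-1}$, and that the recombined level-$t$ pieces are genuinely the building blocks of $H_{s-1,n,\bsgamma\cup\{s\}}$ rather than of a product expression. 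Once these set-indexed identities are in place, the telescoping and the extraction of the constant $\log 4$ carry over verbatim from the product-weight proof.
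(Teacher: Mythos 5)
Your proposal is correct: the split of $H_{s,n,\bsgamma}$ over subsets containing/not containing $s$, the $2$-adic reindexing of $k$, the digit-averaging step using $\sin^2\theta\,\cos^2\theta=\tfrac14\sin^2(2\theta)$ for odd $k$, the telescoping over $v$ with $A_1(1)=0$ and $A_n(z_{s,n})=\Psi_{s,n}$, and the final interchange of the $v$- and $t$-sums (geometric factor $<2$, $\setu=\emptyset$ term giving $\gamma_{\{s\}}N$) reproduce exactly the argument of \cite[Theorem~3]{EKNO21}, which the present note cites without reproving. One small remark on your closing paragraph: that theorem is already stated and proved for general weights in \cite{EKNO21} (the product-weight restriction only enters at the subsequent Theorem~4), and indeed the set-indexed bookkeeping you were wary of goes through exactly as you carried it out.
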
 
\begin{remark}
 Note that, with the notation introduced in Theorem \ref{theorem:H-induction}, we have $(\bsgamma \cup \setv)_{\setu}=\gamma_{\setu \cup \setv}$ 
 for finite $\setu,\setv \subset \bbN$.
\end{remark}

Moreover, the following estimate was derived for the special case of product weights in \cite{EKNO21}. This is the point from which onwards the analysis in \cite{EKNO21} 
is only done for the case of product weights.
\begin{theorem}{\cite[Theorem 4]{EKNO21}} \label{theorem:upper_bound_H}
Let $n, s \in \N$, $N=2^n$, and let $\bsgamma=\{\gamma_\setu\}_{\setu \subset \bbN}$, with $\gamma_{\setu} = \prod_{j \in \setu} \gamma_j$ 
be positive product weights. Furthermore, let the generating vector $\bsz \in \Z_N^s$ be constructed by Algorithm \ref{alg:cbc-dbd}. 
Then for $H_{s,n,\bsgamma}(\bsz)$ the following upper bound holds,
$$
 H_{s,n,\bsgamma}(\bsz) \notag \le
 N \left[ -1 + \prod_{j=1}^s (1 + \gamma_j \log 4) \right].
$$
\end{theorem}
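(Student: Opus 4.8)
The plan is to prove the bound by induction on the dimension $s$, feeding the single-step estimate of Theorem~\ref{theorem:H-induction} into a scalar recursion. The mechanism that makes this work for product weights --- and that breaks down for general weights --- is that $H_{s,n,\bsgamma}(\bsz)$ is a linear functional of the weight family $\{\gamma_\setu\}$ and that product weights transform multiplicatively under the shift $\bsgamma \mapsto \bsgamma\cup\{s\}$.

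First I would record the scaling identity that drives everything. Because $H_{s,n,\bsgamma}(\bsz)$ depends linearly on the weight family $\{\gamma_\setu\}$, and because for product weights $\gamma_{\setu\cup\{s\}} = \gamma_s\,\gamma_\setu$ holds for every $\setu\subseteq\{1\mcol s-1\}$, we obtain
\[
  H_{s-1,n,\bsgamma\cup\{s\}}(\bsz_{\{1\mcol s-1\}}) = \gamma_s\, H_{s-1,n,\bsgamma}(\bsz_{\{1\mcol s-1\}}).
\]
Substituting this into \eqref{eq:estimate-H-induction} collapses Theorem~\ref{theorem:H-induction} to the scalar recursion
\[
  H_{s,n,\bsgamma}(\bsz) \le (1 + \gamma_s \log 4)\, H_{s-1,n,\bsgamma}(\bsz_{\{1\mcol s-1\}}) + \gamma_s N \log 4 .
\]
I would also note that, since Algorithm~\ref{alg:cbc-dbd} builds the components greedily, the truncation $\bsz_{\{1\mcol s-1\}}$ coincides with the generating vector the algorithm returns in dimension $s-1$, so the induction hypothesis applies on the right-hand side.

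For the base case $s=1$ we have $z_1=1$ and, since $H_{0,n,\bsgamma}\equiv 0$, the recursion above (equivalently, \eqref{eq:estimate-H-induction} itself) gives $H_{1,n,\bsgamma}(\bsz)\le \gamma_1 N\log 4 = N[-1+(1+\gamma_1\log 4)]$; if a self-contained check is preferred, the classical identity $\prod_{k=1}^{N-1}\sin(\pi k/N) = N/2^{N-1}$ yields $H_{1,n,\bsgamma}(\bsz) = \gamma_1((N-1)\log 4 - 2\log N) \le \gamma_1 N\log 4$. For the inductive step, assuming $H_{s-1,n,\bsgamma}(\bsz_{\{1\mcol s-1\}}) \le N\big(\prod_{j=1}^{s-1}(1+\gamma_j\log 4)-1\big)$, I would substitute into the recursion and simplify: the constants $-(1+\gamma_s\log 4)$ and $+\gamma_s\log 4$ combine to $-1$, while $(1+\gamma_s\log 4)\prod_{j=1}^{s-1}(1+\gamma_j\log 4)=\prod_{j=1}^{s}(1+\gamma_j\log 4)$, which is precisely the asserted bound. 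Equivalently, one observes beforehand that $a_s:=N\big(\prod_{j=1}^s(1+\gamma_j\log 4)-1\big)$ is the exact solution of $a_s=(1+\gamma_s\log 4)a_{s-1}+\gamma_s N\log 4$ with $a_0=0$, and that $H_{s,n,\bsgamma}(\bsz_{\{1\mcol s\}})$ is dominated by this sequence termwise.

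There is no genuine obstacle here: once the scaling identity reduces Theorem~\ref{theorem:H-induction} to a scalar linear recursion, all that remains is a routine telescoping induction. The only point deserving care is that the shifted weight family $\bsgamma\cup\{s\}$ is again of product type only because $\bsgamma$ is --- for general positive weights it is not, which is exactly why the remainder of the analysis in \cite{EKNO21}, and the extension carried out in the present note, cannot proceed along these lines and must estimate $H_{s,n,\bsgamma}(\bsz)$ differently.
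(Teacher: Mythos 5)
Your proof is correct and follows essentially the same route as the paper: iterate the one-step recursion of Theorem~\ref{theorem:H-induction}, which for product weights collapses to a scalar recursion via the identity $H_{s-1,n,\bsgamma\cup\{s\}}(\bsz_{\{1\mcol s-1\}})=\gamma_s\,H_{s-1,n,\bsgamma}(\bsz_{\{1\mcol s-1\}})$, together with the explicit base-case evaluation $H_{1,n,\bsgamma}(\bsz_{\{1\}})=(\log 4)\gamma_{\{1\}}(N-n-1)\le(\log 4)\gamma_{\{1\}}N$. Your induction is precisely the product-weight specialization of the subset-sum unrolling used in the proof of Theorem~\ref{theorem:upper_bound_H_gen}, since $N\sum_{\emptyset\neq\setv\subseteq\{1\mcol s\}}(\log 4)^{|\setv|}\prod_{j\in\setv}\gamma_j=N\bigl[-1+\prod_{j=1}^{s}(1+\gamma_j\log 4)\bigr]$, and you correctly identify the multiplicativity of the shifted weights as the step that fails for general weights.
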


We will now show a generalization of Theorem \ref{theorem:upper_bound_H} to general positive weights $\bsgamma=\{\gamma_\setu\}_{\setu \subset \bbN}$ 
without having to make any further assumptions on their structure. Indeed, we have the following theorem. 
\begin{theorem}\label{theorem:upper_bound_H_gen}
Let $n, s \in \N$, $N=2^n$, and let $\bsgamma=\{\gamma_\setu\}_{\setu \subset \bbN}$ be positive weights. 
Furthermore, let the generating vector $\bsz \in \Z_N^s$ be constructed by Algorithm \ref{alg:cbc-dbd}.
Then for $H_{s,n,\bsgamma}(\bsz)$ the following upper bound holds:
$$
 H_{s,n,\bsgamma}(\bsz) \notag \le
N \sum_{\emptyset \neq \setv \subseteq \{1\mcol s\}}
(\log 4)^{\abs{\setv}} \gamma_{\setv}.
$$
\end{theorem}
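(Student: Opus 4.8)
The plan is to argue by induction on the dimension $s$, with \RefThm{theorem:H-induction} as the engine. The naive induction hypothesis — that the output $\bsz$ of \RefAlg{alg:cbc-dbd} satisfies $H_{s,n,\bsgamma}(\bsz)\le N\sum_{\emptyset\ne\setv\subseteq\{1\mcol s\}}(\log 4)^{\abs{\setv}}\gamma_\setv$ — does not close, because the right-hand side of \eqref{eq:estimate-H-induction} contains $H_{s-1,n,\bsgamma\cup\{s\}}(\bsz_{\{1\mcol s-1\}})$, and for general weights $\bsgamma\cup\{s\}$ is not a scalar multiple of $\bsgamma$ on the subsets of $\{1\mcol s-1\}$; hence the component-by-component output for $\bsgamma$ cannot simply be reinterpreted as the output for $\bsgamma\cup\{s\}$ (this rescaling is exactly what makes the product-weight proof of \RefThm{theorem:upper_bound_H} go through and what fails here). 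I would therefore prove the stronger statement: for every $s$, every positive weights $\bsgamma$, and every finite $\setw\subset\bbN$ with $\setw\cap\{1\mcol s\}=\emptyset$, the vector $\bsz$ produced by \RefAlg{alg:cbc-dbd} for $\bsgamma$ satisfies $H_{s,n,\bsgamma\cup\setw}(\bsz)\le N\sum_{\emptyset\ne\setv\subseteq\{1\mcol s\}}(\log 4)^{\abs{\setv}}\gamma_{\setv\cup\setw}$. The theorem is the case $\setw=\emptyset$.

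For the base case $s=1$ the algorithm sets $z_1=1$, so $H_{1,n,\bsgamma\cup\setw}(\bsz)=\gamma_{\{1\}\cup\setw}\sum_{k=1}^{N-1}\log(1/\sin^2(\pi k/N))$, and the identity $\prod_{k=1}^{N-1}\sin(\pi k/N)=N/2^{N-1}$ gives $\sum_{k=1}^{N-1}\log(1/\sin^2(\pi k/N))=(N-1)\log 4-2\log N\le N\log 4$, which is the required bound $N(\log 4)\gamma_{\{1\}\cup\setw}$. For the inductive step, since the algorithm is component-by-component, $\bsz_{\{1\mcol s-1\}}$ is exactly the output for $\bsgamma$ in dimension $s-1$. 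Splitting $H_{s,n,\bsgamma\cup\setw}(\bsz)$ according to whether $s\in\setu$ yields $H_{s-1,n,\bsgamma\cup\setw}(\bsz_{\{1\mcol s-1\}})$ plus the cross term $\sum_{k=1}^{N-1}\log(1/\sin^2(\pi k z_s/N))\,Q_k$, where $Q_k:=\sum_{\setu\subseteq\{1\mcol s-1\}}\gamma_{\setu\cup\{s\}\cup\setw}\prod_{j\in\setu}\log(1/\sin^2(\pi k z_j/N))\ge 0$ and $\sum_k Q_k=(N-1)\gamma_{\{s\}\cup\setw}+H_{s-1,n,\bsgamma\cup\{s\}\cup\setw}(\bsz_{\{1\mcol s-1\}})$. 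One then bounds the cross term by $\log 4\,[\,N\gamma_{\{s\}\cup\setw}+H_{s-1,n,\bsgamma\cup\{s\}\cup\setw}(\bsz_{\{1\mcol s-1\}})\,]$ — the shifted analogue of \RefThm{theorem:H-induction} — invokes the strengthened hypothesis in dimension $s-1$ with shift sets $\setw$ and $\setw\cup\{s\}$, and recombines: denoting by $G_{s-1}(\cdot)$ the subset sum $\sum_{\emptyset\ne\setv\subseteq\{1\mcol s-1\}}(\log 4)^{\abs{\setv}}(\cdot)_\setv$, the quantity $G_{s-1}(\bsgamma\cup\setw)+\log 4[\gamma_{\{s\}\cup\setw}+G_{s-1}(\bsgamma\cup\setw\cup\{s\})]$ reassembles, after relabelling $\setv\mapsto\setv\cup\{s\}$, into $\sum_{\emptyset\ne\setv\subseteq\{1\mcol s\}}(\log 4)^{\abs{\setv}}\gamma_{\setv\cup\setw}$ (a routine bookkeeping of subsets of $\{1\mcol s\}$ by whether they contain $s$).

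The main obstacle is the shifted cross-term bound $\sum_k\log(1/\sin^2(\pi k z_s/N))\,Q_k\le \log 4\,[\,N\gamma_{\{s\}\cup\setw}+H_{s-1,n,\bsgamma\cup\{s\}\cup\setw}(\bsz_{\{1\mcol s-1\}})\,]$. In \cite{EKNO21} the unshifted version ($\setw=\emptyset$) is obtained from the fact that the digits of $z_s$ minimize $h_{s,n,v,\bsgamma}$ together with an averaging over the admissible values of $z_s$; but those digits were selected against the weight family $\{\gamma_{\setu\cup\{s\}}\}_{\setu\subseteq\{1\mcol s-1\}}$, whereas $Q_k$ is built from the different family $\{\gamma_{\setu\cup\{s\}\cup\setw}\}_{\setu\subseteq\{1\mcol s-1\}}$, so the minimization does not transfer verbatim. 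The natural route is to isolate a ``per-subset'' estimate: for the output $\bsz$ and every $m$ and every $\emptyset\ne\setu\subseteq\{1\mcol m-1\}$,
\[
 \sum_{k=1}^{N-1}\log\!\Big(\tfrac1{\sin^2(\pi k z_m/N)}\Big)\prod_{j\in\setu}\log\!\Big(\tfrac1{\sin^2(\pi k z_j/N)}\Big)\ \le\ \log 4\sum_{k=1}^{N-1}\prod_{j\in\setu}\log\!\Big(\tfrac1{\sin^2(\pi k z_j/N)}\Big),
\]
since summing this against the positive coefficients $\gamma_{\setu\cup\{s\}\cup\setw}$ yields the shifted cross-term bound for \emph{every} $\setw$ at once — and in fact, iterating it over the components in increasing order of index gives the theorem directly, bypassing the shift-set induction altogether. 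To establish such a subset-wise inequality I would reopen the digit-by-digit analysis of \cite{EKNO21}: use the dyadic decomposition $\{1,\dots,N-1\}=\bigsqcup_{t=1}^{n}\{2^{n-t}l:\,l\text{ odd},\,1\le l<2^t\}$ together with the elementary identity $\log(1/\sin^2(\pi a/2^v))+\log(1/\sin^2(\pi (a+2^{v-1})/2^v))=\log 4+\log(1/\sin^2(\pi a/2^{v-1}))$ to write each side as a telescoping sum over digit positions, and then verify that the bit chosen at each level — although chosen to minimize a positive \emph{combination} of such profiles — still controls each individual profile at the ``$\le$ average'' precision that the telescoping requires. Verifying this uniformity is where I expect the genuine work to lie, and where the product-weight hypothesis of \cite{EKNO21} is ultimately removed.
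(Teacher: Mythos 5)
Your overall strategy coincides with the paper's: strengthen the claim to the shifted families $\bsgamma\cup\setw$, unroll a \RefThm{theorem:H-induction}-type recursion component by component, and finish with the subset bookkeeping together with the explicit evaluation $H_{1,n,\bsgamma\cup\setw}(\bsz_{\{1\}})=(\log 4)\gamma_{\{1\}\cup\setw}(N-n-1)\le (\log 4)\gamma_{\{1\}\cup\setw}N$; your base case and the recombination of the subset sums over $\setv\not\ni s$ and $\setv\ni s$ are exactly the paper's computation. The paper carries this out by applying the recursion~\eqref{eq:recursion_H} with $\bsgamma$ replaced by $\bsgamma\cup\setv$ for every $\setv\subseteq\{r+1\mcol s\}$, justified by the extensibility of \RefAlg{alg:cbc-dbd} in the dimension and by the same argument that yields \RefThm{theorem:H-induction}. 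You correctly identify this shifted recursion as the crux of the generalization beyond product weights --- but you then do not prove it. A proposal whose central lemma is announced with ``this is where I expect the genuine work to lie'' is a plan, not a proof, and the entire content of the theorem (everything that distinguishes it from \RefThm{theorem:upper_bound_H}) resides in precisely that lemma.

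Moreover, the route you propose for closing the gap --- a per-subset estimate of the form $\sum_k\log(1/\sin^2(\pi k z_m/N))\prod_{j\in\setu}\log(1/\sin^2(\pi k z_j/N))\le\log 4\sum_k\prod_{j\in\setu}\log(1/\sin^2(\pi k z_j/N))$ for \emph{every individual} $\setu$ --- is almost certainly not extractable from the algorithm. The digit chosen at level $v$ is the argmin of $h_{r,n,v,\bsgamma}$, i.e.\ of one particular positive linear combination (with coefficients $\gamma_{\setu\cup\{r\}}$) of the nonnegative profiles $k\mapsto\prod_{j\in\setu}\log(1/\sin^2(\pi k z_j/2^t))$, and the ``chosen $\le$ average'' inequality that drives the telescoping holds only for that combination; a minimizer of a positive sum need not be below average on each summand separately, so the subset-wise uniformity you would need cannot be deduced from the argmin property. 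What is actually required (and what the paper uses) is the recursion for each fixed shifted family $\bsgamma\cup\setw$ as a whole, i.e.\ the cross-term bound with the coefficients $\{\gamma_{\setu\cup\{r\}\cup\setw}\}_{\setu\subseteq\{1\mcol r-1\}}$, obtained by rerunning the digit-by-digit averaging argument of \RefThm{theorem:H-induction} for that family; whether and how that argument applies to a vector whose digits were selected against the \emph{unshifted} weights is exactly the point your proposal leaves open. Until that step is supplied, the proof is incomplete.
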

\begin{proof}
Due to the formulation of Algorithm \ref{alg:cbc-dbd}, it is such that it is extensible in the dimension, 
and for every $r\in\{2,\ldots,s\}$ we have, similarly to Theorem \ref{theorem:H-induction},  
\begin{equation}\label{eq:recursion_H}
 H_{r,n,\bsgamma} (\bsz_{\{1\mcol r\}}) \le H_{r-1,n,\bsgamma} (\bsz_{\{1\mcol r-1\}}) + 
 (\log 4) [\gamma_{\{r\}} N + H_{r-1,n,\bsgamma\cup\{r\}} (\bsz_{\{1\mcol r-1\}})].
\end{equation}
Thus, we obtain
\begin{eqnarray*}
 H_{s,n,\bsgamma} (\bsz_{\{1\mcol s\}}) &\le& H_{s-1,n,\bsgamma} (\bsz_{\{1\mcol s-1\}}) + 
 (\log 4) [\gamma_{\{s\}} N + H_{s-1,n,\bsgamma\cup\{s\}} (\bsz_{\{1\mcol s-1\}})]\\
 &\le& H_{s-2,n,\bsgamma}(\bsz_{\{1 \mcol s-2\}}) + 
  (\log 4) [\gamma_{\{s-1\}} N + H_{s-2,n,\bsgamma\cup\{s-1\}} (\bsz_{\{1 \mcol s-2\}})]\\
&&+(\log 4) [\gamma_{\{s\}} N + H_{s-1,n,\bsgamma\cup\{s\}} (\bsz_{\{1 \mcol s-1\}})]\\
&\le& H_{s-2,n,\bsgamma}(\bsz_{\{1 \mcol s-2\}}) +  (\log 4) \gamma_{\{s-1\}} N + (\log 4)\gamma_{\{s\}} N\\
&&+ (\log 4) H_{s-2,n,\bsgamma\cup\{s-1\}} (\bsz_{\{1 \mcol s-2\}})\\
&&+ (\log 4) \left[H_{s-2,n,\bsgamma\cup\{s\}} (\bsz_{\{1 \mcol s-2\}}) + (\log 4) \gamma_{\{s-1, s\}} N \right.\\
&& + \left.(\log 4) H_{s-2,n,\bsgamma\cup\{s-1, s\}}(\bsz_{\{1 \mcol s-2\}})\right]\\
&=& H_{s-2,n,\bsgamma}(\bsz_{\{1 \mcol s-2\}}) \\
&&+ (\log 4) H_{s-2,n,\bsgamma\cup\{s-1\}} (\bsz_{\{1 \mcol s-2\}}) + (\log 4) H_{s-2,n,\bsgamma\cup\{s\}} (\bsz_{\{1 \mcol s-2\}})\\
&&+ (\log 4)^2 H_{s-2,n,\bsgamma\cup\{s-1, s\}}(\bsz_{\{1 \mcol s-2\}})\\
&&+ (\log 4) \gamma_{\{s-1\}} N + (\log 4) \gamma_{\{s\}} N + (\log 4)^2 \gamma_{\{s-1,s\}} N.
\end{eqnarray*}
Consequently, we have shown
\[
 H_{s,n,\bsgamma} (\bsz_{\{1\mcol s\}}) \le \sum_{\setv \subseteq \{s-1 \mcol s\}} (\log 4)^{\abs{\setv}} H_{s-2,n,\bsgamma\cup\setv} (\bsz_{\{1 \mcol s-2\}}) + 
 N \sum_{\emptyset\neq \setv\subseteq \{s-1 \mcol s\}} (\log 4)^{\abs{\setv}} \gamma_{\setv}.
\]
This yields, by using \eqref{eq:recursion_H} again,
\begin{eqnarray*}
 H_{s,n,\bsgamma} (\bsz_{\{1 \mcol s\}}) &\le& \sum_{\setv \subseteq \{s-1 \mcol s\}} (\log 4)^{\abs{\setv}} 
 \left[ H_{s-3,n,\bsgamma\cup\setv} (\bsz_{\{1 \mcol s-3\}}) + (\log 4) (\bsgamma\cup\setv)_{\{s-2\}} N \right.\\
 &&+\left. (\log 4) H_{s-3,n,\bsgamma\cup\setv\cup\{s-2\}} (\bsz_{\{1 \mcol s-3\}})\right] 
 + N\sum_{\emptyset\neq \setv\subseteq \{s-1 \mcol s\}} (\log 4)^{\abs{\setv}} \gamma_{\setv}\\
 &=& \sum_{\setv \subseteq \{s-2:s\}} (\log 4)^{\abs{\setv}} H_{s-3,n,\bsgamma\cup\setv} (\bsz_{\{1 \mcol s-3\}}) + 
 N \sum_{\emptyset\neq \setv\subseteq \{s-2:s\}} (\log 4)^{\abs{\setv}} \gamma_{\setv}.
\end{eqnarray*}
We can use this principle recursively, until we arrive at
\[
  H_{s,n,\bsgamma} (\bsz_{\{1 \mcol s\}}) \le \sum_{\setv \subseteq \{2:s\}} (\log 4)^{\abs{\setv}} H_{1,n,\bsgamma\cup\setv} (\bsz_{\{1\}}) + 
 N \sum_{\emptyset\neq \setv\subseteq \{2:s\}} (\log 4)^{\abs{\setv}} \gamma_{\setv}.
\]
Furthermore, it can be shown in complete analogy to \cite[Proof of Theorem 4]{EKNO21} that
\[
  H_{1,n,\bsgamma\cup\setv} (\bsz_{\{1\}}) = (\log 4) \gamma_{\{1\}\cup \setv} (N-n-1)  \le (\log 4) \gamma_{\{1\}\cup \setv} N,
\]
which yields 
\begin{eqnarray*}
   H_{s,n,\bsgamma} (\bsz_{\{1 \mcol s\}}) &\le& N\sum_{\setv \subseteq \{2:s\}} (\log 4)^{\abs{\setv}+1} \gamma_{\{1\}\cup \setv} + 
 N \sum_{\emptyset\neq \setv\subseteq \{2:s\}} (\log 4)^{\abs{\setv}} \gamma_{\setv}\\
 &=& N \sum_{\emptyset\neq\setv\subseteq \{1 \mcol s\}} (\log 4)^{\abs{\setv}} \gamma_{\setv},
\end{eqnarray*}
as claimed. 
\end{proof}

We can now use the general estimate in Theorem \ref{theorem:upper_bound_H_gen} to also show a more general version of 
\cite[Theorem 5]{EKNO21}. 

\begin{theorem}\label{thm:main_result_gen}
Let $N=2^n$, with $n \in \N$, and let $\bsgamma=\{\gamma_\setu\}_{\setu\subset \NN}$ be general positive weights. 
Furthermore, denote by $\bsz = (z_1, \ldots, z_s)$ the corresponding generating vector constructed by Algorithm \ref{alg:cbc-dbd}. Then there exists a 
constant $a>0$, which is independent of $s$ and $N$, such that
\begin{equation}\label{eq:optcoeff-dbd_gen}
	T_{1,\bsgamma}(N,\bsz) \le	(1+\log N)\, \sum_{\emptyset \ne \setu \subseteq \{1 \mcol s\}} \frac{\gamma_\setu}{N} (a \log N)^{\abs{\setu}}.
\end{equation}
Moreover, if the weights satisfy	 	
\[
	\sum_{j \ge 1} \tilde\gamma_j < \infty,
		\quad\text{where}\quad
		\tilde\gamma_j := \max_{\setv\subseteq\{1 \mcol j-1\}} \frac{\gamma_{\setv\cup\{j\}}}{\gamma_\setv},
\]
then for any $\delta>0$ there exists a constant $C_\delta >0$, which is again independent of $s$ and $N$, such that 
\[
 T_{1,\bsgamma}(N,\bsz) \le \frac{C_\delta}{N^{1-\delta}}.
\]
\end{theorem}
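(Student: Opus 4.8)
The plan is to combine \RefThm{thm:T_target_CBCDBD} with the newly established \RefThm{theorem:upper_bound_H_gen}. Dividing the bound of \RefThm{theorem:upper_bound_H_gen} by $N$ gives $N^{-1}H_{s,n,\bsgamma}(\bsz) \le \sum_{\emptyset \ne \setu \subseteq \{1\mcol s\}} (\log 4)^{\abs{\setu}} \gamma_\setu$, and the key observation is that this is exactly the (negated) term $-\sum_{\emptyset \ne \setu \subseteq \{1\mcol s\}} \gamma_\setu (\log 4)^{\abs{\setu}}$ occurring in \RefThm{thm:T_target_CBCDBD}; the two cancel and one is left with
\[
 T_{1,\bsgamma}(N,\bsz) \le \sum_{\emptyset \ne \setu \subseteq \{1\mcol s\}} \frac{\gamma_\setu}{N} \left[ (\log 4 + 2(1+\log N))^{\abs{\setu}} + 2\abs{\setu}\,(1+2\log N)^{\abs{\setu}}(1+\log N) \right].
\]
This cancellation is essential: without it the surviving term $\sum \gamma_\setu (\log 4)^{\abs{\setu}}$ would lack the factor $N^{-1}$ and \eqref{eq:optcoeff-dbd_gen} could not hold.

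Next I would bound the bracket uniformly in $N$. Since $N = 2^n \ge 2$, we have $\log 4 \le 2\log N$ and $2 \le (2/\log 2)\log N$, so both $\log 4 + 2(1+\log N)$ and $2(1+2\log N)$ are at most $a_1\log N$ with $a_1 := 4 + 2/\log 2$. Using $2\abs{\setu} \le 2^{\abs{\setu}}$ for $\abs{\setu}\ge1$, the second summand in the bracket is at most $(1+\log N)(a_1\log N)^{\abs{\setu}}$ and the first is at most $(a_1\log N)^{\abs{\setu}}$. Pulling out the common factor $1+\log N$ and absorbing the leftover constant into the base (again via $2 \le 2^{\abs{\setu}}$) yields \eqref{eq:optcoeff-dbd_gen} with, e.g., $a = 2a_1$, which is independent of $s$ and $N$.

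For the second assertion I would start from \eqref{eq:optcoeff-dbd_gen} in the form $T_{1,\bsgamma}(N,\bsz) \le \frac{1+\log N}{N}\sum_{\emptyset \ne \setu \subseteq \{1\mcol s\}} \gamma_\setu (a\log N)^{\abs{\setu}}$. Writing $\setu = \{j_1 < \cdots < j_k\}$ and telescoping, $\gamma_\setu = \prod_{i=1}^{k}\gamma_{\{j_1,\ldots,j_i\}}/\gamma_{\{j_1,\ldots,j_{i-1}\}} \le \prod_{i=1}^{k}\tilde\gamma_{j_i}$, since each ratio is of the form $\gamma_{\setv\cup\{j_i\}}/\gamma_\setv$ with $\setv \subseteq \{1\mcol j_i-1\}$ and hence bounded by $\tilde\gamma_{j_i}$. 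Consequently
\[
 \sum_{\emptyset \ne \setu \subseteq \{1\mcol s\}} \gamma_\setu (a\log N)^{\abs{\setu}} \le \prod_{j=1}^{s}\bigl(1 + a\tilde\gamma_j\log N\bigr) \le \prod_{j=1}^{\infty}\bigl(1 + a\tilde\gamma_j\log N\bigr).
\]

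Finally I would render this last product subpolynomial in $N$ by the usual splitting trick. Given $\delta > 0$, pick $J$ with $\sum_{j>J}\tilde\gamma_j < \delta/(2a)$, which is possible by the summability hypothesis; then $\prod_{j>J}(1+a\tilde\gamma_j\log N) \le \exp\bigl(a\log N\sum_{j>J}\tilde\gamma_j\bigr) < N^{\delta/2}$, while $\prod_{j=1}^{J}(1+a\tilde\gamma_j\log N) \le (1 + a(\sup_j\tilde\gamma_j)\log N)^{J}$ is a polynomial in $\log N$ of degree $J$ and therefore bounded by $C_\delta N^{\delta/2}$ for a constant $C_\delta$ depending only on $\delta$ and the weights. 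Combining these gives $T_{1,\bsgamma}(N,\bsz) \le (1+\log N)\,C_\delta\, N^{\delta-1} \le C_\delta' N^{-(1-2\delta)}$, and a final relabelling of $\delta$ completes the proof. The only real obstacle I anticipate is bookkeeping --- checking that each constant that appears is genuinely independent of $s$ and $N$ --- since the one substantial new ingredient, the general-weights recursion for $H_{s,n,\bsgamma}$, has already been supplied by \RefThm{theorem:upper_bound_H_gen}.
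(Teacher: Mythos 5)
Your proof is correct and follows essentially the same route as the paper: combine Theorem~\ref{thm:T_target_CBCDBD} with Theorem~\ref{theorem:upper_bound_H_gen}, observe that the $\tfrac{1}{N}H_{s,n,\bsgamma}(\bsz)$ term is absorbed by the negative $\sum_{\setu}\gamma_\setu(\log 4)^{|\setu|}$ term, and then bound the remaining sums by $(1+\log N)\sum_\setu \tfrac{\gamma_\setu}{N}(a\log N)^{|\setu|}$ via $|\setu|\le 2^{|\setu|}$. The only difference is that for the second claim the paper simply invokes \cite[Lemma 4]{EKNO21}, whereas you reprove its content directly with the telescoping bound $\gamma_\setu\le\prod_{j\in\setu}\tilde\gamma_j$ and the standard tail-splitting argument; both are fine.
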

\begin{proof}
We use the bound in Theorem \ref{thm:T_target_CBCDBD} combined with the bound on $H_{s,n,\bsgamma}(\bsz)$ in Theorem \ref{theorem:upper_bound_H_gen} to obtain
\begin{eqnarray*}
	T_{1,\bsgamma}(N,\bsz) &\le &
	 \sum_{\emptyset \ne \setu \subseteq \{1 \mcol s\}} \frac{\gamma_\setu}{N} (\log 4 + 2(1 + \log N))^{\abs{\setu}} \\
	&&+\sum_{\emptyset \ne \setu \subseteq \{1 \mcol s\}} \frac{\gamma_\setu}{N} \, 2 |\setu| \, \left(1 + 2 \log N\right)^{|\setu|} \, (1+\log N)\\
	&\le &
	 \sum_{\emptyset \ne \setu \subseteq \{1 \mcol s\}} \frac{\gamma_\setu}{N} (\log 4 + 2(1 + \log N))^{\abs{\setu}} \\
	&&+\sum_{\emptyset \ne \setu \subseteq \{1 \mcol s\}} \frac{\gamma_\setu}{N} \, 2^{|\setu|+1} \, \left(1 + 2 \log N\right)^{|\setu|} \, (1+\log N),
\end{eqnarray*} 
where we used that $\abs{\setu}\le 2^{\abs{\setu}}$ for non-empty $\setu$.
Thus, there exists a constant $a>0$ such that
\[
 T_{1,\bsgamma}(N,\bsz) \le (1+\log N)\, \sum_{\emptyset \ne \setu \subseteq \{1 \mcol s\}} \frac{\gamma_\setu}{N} (a \log N)^{\abs{\setu}},
\]
which is the bound in \eqref{eq:optcoeff-dbd_gen}. The second claim in the theorem follows immediately by using \cite[Lemma 4]{EKNO21}. 
\end{proof}

Next, we use Theorem \ref{thm:main_result_gen} to show a more general version of \cite[Corollary 1]{EKNO21}. We formulate 
this result for weights denoted by $\bseta$, and then describe two special cases in a remark below.
\begin{corollary} \label{cor:main-result-dbd_gen}
Let $N=2^n$, with $n \in \N$, and let $\bseta=\{\eta_\setu\}_{\setu\subset \bbN}$ be general positive weights, satisfying
\begin{equation} \label{eq:summability-gamma-tilde}
	\sum_{j \ge 1} \tilde\eta_j < \infty,
		\quad\text{where}\quad
		\tilde\eta_j := \max_{\setv\subseteq\{1 \mcol j-1\}} \frac{\eta_{\setv\cup\{j\}}}{\eta_\setv}.
\end{equation}
Denote by $\bsz=(z_1,\ldots,z_s)$ the generating vector constructed by Algorithm \ref{alg:cbc-dbd} for the weights $\bseta$.
Then, for any $\delta>0$ and each $\alpha>1$, the worst-case error $e_{N,s,\alpha,\bseta^{\alpha}}(\bsz)$ 
in $E_{s,\bseta^\alpha}^{\alpha}$
satisfies
\begin{equation*}
	e_{N,s,\alpha,\bseta^{\alpha}}(\bsz)
	\le \frac{C}{N^\alpha} + \frac{C_\delta^{\alpha}}{N^{\alpha(1-\delta)}}
\end{equation*}  
with constants $C_\alpha,C_\delta > 0$, where $C_\alpha$ may depend on $\alpha$, and $C_\delta$ may depend on $\delta$, 
but $C_\alpha$ and $C_\delta$ are independent of $s$ and $N$. 
\end{corollary}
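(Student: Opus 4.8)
The plan is to follow the proof of \cite[Corollary 1]{EKNO21} and reduce the worst-case error bound to the estimate for $T_{1,\bseta}(N,\bsz)$ that is already available from Theorem~\ref{thm:main_result_gen}. First I would note that, since $N=2^n$ and Algorithm~\ref{alg:cbc-dbd} only ever produces odd components (it is initialized with $1$ and in each inner step adds an \emph{even} number $z\,2^{v-1}$ with $v\ge2$), every $z_j$ satisfies $\gcd(z_j,N)=1$. Hence Proposition~\ref{prop:trunc_error}, applied to $\bsz$ with the function-space weights $\bseta^{\alpha}=\{\eta_\setu^{\alpha}\}_{\setu\subset\bbN}$, gives
\[
 e_{N,s,\alpha,\bseta^{\alpha}}(\bsz) \;\le\; T_{\alpha,\bseta^{\alpha}}(N,\bsz) \;+\; \frac{1}{N^\alpha}\sum_{\emptyset\ne\setu\subseteq\{1\mcol s\}}\eta_\setu^{\alpha}\,(4\zeta(\alpha))^{\abs{\setu}},
\]
so it suffices to bound the first term on the right-hand side by a constant times $N^{-\alpha(1-\delta)}$ and the second by a constant times $N^{-\alpha}$.

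For the first term I would use the elementary inequality $\sum_k a_k^{\alpha}\le\bigl(\sum_k a_k\bigr)^{\alpha}$ valid for non-negative reals and $\alpha\ge1$ (the embedding $\ell^1\hookrightarrow\ell^{\alpha}$). Since $r_{\alpha,\bseta^{\alpha}}(\bsm)^{-1}=\eta_{\supp(\bsm)}^{\alpha}\prod_{j\in\supp(\bsm)}\abs{m_j}^{-\alpha}=\bigl(r_{1,\bseta}(\bsm)^{-1}\bigr)^{\alpha}$, and the index set in the definition of $T$ (namely the $\bsm\in M_{N,s}\setminus\{\bszero\}$ with $\bsm\cdot\bsz\equiv0\tpmod N$) does not depend on $\alpha$, this yields $T_{\alpha,\bseta^{\alpha}}(N,\bsz)\le\bigl(T_{1,\bseta}(N,\bsz)\bigr)^{\alpha}$. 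The key point is now that the vector $\bsz$ is, by hypothesis, the output of Algorithm~\ref{alg:cbc-dbd} \emph{run for the weights $\bseta$}, and that these weights satisfy $\sum_{j\ge1}\tilde\eta_j<\infty$ by \eqref{eq:summability-gamma-tilde}. Therefore the second part of Theorem~\ref{thm:main_result_gen}, applied with $\bsgamma=\bseta$, produces for every $\delta>0$ a constant $C_\delta>0$ independent of $s$ and $N$ with $T_{1,\bseta}(N,\bsz)\le C_\delta\,N^{-(1-\delta)}$, whence $T_{\alpha,\bseta^{\alpha}}(N,\bsz)\le C_\delta^{\alpha}\,N^{-\alpha(1-\delta)}$.

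For the second term I would show that $\sum_{\emptyset\ne\setu}\eta_\setu^{\alpha}(4\zeta(\alpha))^{\abs{\setu}}$ is finite uniformly in $s$. Telescoping the definition of $\tilde\eta_j$ along an enumeration $\setu=\{j_1<\cdots<j_k\}$ and using $\eta_\emptyset=1$ gives $\eta_\setu\le\prod_{j\in\setu}\tilde\eta_j$; moreover $\sum_j\tilde\eta_j<\infty$ forces $\tilde\eta_j\to0$, so (using $\alpha>1$) also $\sum_j\tilde\eta_j^{\alpha}<\infty$. Consequently
\[
 \sum_{\emptyset\ne\setu\subseteq\{1\mcol s\}}\eta_\setu^{\alpha}(4\zeta(\alpha))^{\abs{\setu}} \;\le\; \prod_{j\ge1}\bigl(1+4\zeta(\alpha)\,\tilde\eta_j^{\alpha}\bigr)-1 \;\le\; \exp\Bigl(4\zeta(\alpha)\sum_{j\ge1}\tilde\eta_j^{\alpha}\Bigr)-1 \;=:\; C_\alpha-1,
\]
a finite bound independent of $s$. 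Adding the two estimates gives $e_{N,s,\alpha,\bseta^{\alpha}}(\bsz)\le C_\alpha N^{-\alpha}+C_\delta^{\alpha}N^{-\alpha(1-\delta)}$, as claimed. I do not expect a genuine obstacle here: the corollary is essentially a bookkeeping consequence of Theorem~\ref{thm:main_result_gen} (which in turn rests on the new general-weights bound of Theorem~\ref{theorem:upper_bound_H_gen}), exactly mirroring \cite[Corollary 1]{EKNO21}. The only steps needing a little care are the Jensen-type passage from $\alpha=1$ to general $\alpha$ via the weights $\bseta^{\alpha}$, and keeping the two roles of the weights — as the parameters defining the function space $E_{s,\bseta^{\alpha}}^{\alpha}$ versus as the weights driving Algorithm~\ref{alg:cbc-dbd} — notationally distinct throughout.
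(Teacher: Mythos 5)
Your proposal is correct and follows essentially the same route as the paper: reduce via Proposition~\ref{prop:trunc_error}, apply the Jensen-type inequality $\sum_k a_k^{\alpha}\le(\sum_k a_k)^{\alpha}$ to get $T_{\alpha,\bseta^{\alpha}}\le(T_{1,\bseta})^{\alpha}$, and invoke Theorem~\ref{thm:main_result_gen} for the weights $\bseta$. The only difference is that you spell out the telescoping bound $\eta_\setu\le\prod_{j\in\setu}\tilde\eta_j$ for the dimension-independent constant $C_\alpha$, which the paper leaves as ``standard arguments''; your version of that step is correct.
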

\begin{proof}
Since $N=2^n$ and since by the formulation of \RefAlg{alg:cbc-dbd} all components of $\bsz$ are odd, we have in particular that $\gcd(z_j,N)=1$ for all $j\in\{1,\ldots,s\}$.
Therefore, by Proposition \ref{prop:trunc_error}, the worst-case error $e_{N,s,\alpha,\bseta^\alpha}(\bsz)$ satisfies
\[
  e_{N,s,\alpha,\bseta^\alpha}(\bsz) \le
  \frac{1}{N^\alpha} \sum_{\emptyset\neq \setu \subseteq \{1 \mcol s\}} \eta_\setu^\alpha \, (4\zeta (\alpha))^{\abs{\setu}}
  + T_{\alpha,\bseta^\alpha}(N,\bsz).
\]
Next, we use that for $\alpha \ge 1$ we have $\sum_i x_i^\alpha \le (\sum_i x_i)^\alpha$ for $x_i \ge 0$, and we also use Theorem \ref{thm:main_result_gen}. 
This yields that for any $\delta>0$ there exists a constant $C_\delta>0$, which is independent of $s$ and $N$, such that
\[
  T_{\alpha,\bseta^\alpha}(N,\bsz)=\!\!\!\sum_{\substack{\bszero \ne \bsm \in M_{N,s}\\ \bsm\cdot\bsz\equiv 0 \tpmod{N}}} \frac{1}{r_{\alpha,\bseta^{\alpha}}(\bsm)}  \le
  \left( \sum_{\substack{\bszero \ne \bsm \in M_{N,s}\\ \bsm\cdot\bsz\equiv 0 \tpmod{N}}} \frac{1}{r_{1,\bseta}(\bsm)} \right)^{\!\!\alpha} =
  (T_{1,\bseta}(N,\bsz))^\alpha \le \frac{C_\delta^{\alpha}}{N^{\alpha(1-\delta)}}.
\]
Using Assumption \eqref{eq:summability-gamma-tilde} and standard arguments (or a similar reasoning as in the proof of \cite[Lemma 4]{EKNO21}), we also see that 
\[
 \sum_{\emptyset\neq \setu \subseteq \{1 \mcol s\}} \eta_\setu \, (4\zeta (\alpha))^{\abs{\setu}} < C_\alpha
\]
for some constant $C_\alpha>0$. This yields the result.  
\end{proof}
\begin{remark}
 If we choose $\bseta=\bsgamma=\{\gamma_\setu\}_{\setu\subset \NN}$ for positive weights $\bsgamma$ in Corollary \ref{cor:main-result-dbd_gen}, then 
 the algorithm can be run independently of $\alpha$, and we get an error bound of almost optimal convergence order for the space 
 $E_{s,\bsgamma^\alpha}^{\alpha}$ for any $\alpha>1$. In this sense, Algorithm \ref{alg:cbc-dbd} can be said to be universal with respect to $\alpha$.
 
 If, on the other hand, we would like to have an error bound of almost optimal convergence order for the space $E_{s,\bsgamma}^{\alpha}$, then we need to 
 choose $\bseta=\bsgamma^{1/\alpha}=\{\gamma_\setu^{1/\alpha}\}_{\setu\subset \NN}$ in Corollary \ref{cor:main-result-dbd_gen}, and also run Algorithm \ref{alg:cbc-dbd} 
 for $\bsgamma^{1/\alpha}$, i.e., in this case Algorithm \ref{alg:cbc-dbd} is not universal.
\end{remark}

\section{Efficient implementation of Algorithm \ref{alg:cbc-dbd} for POD weights}

In \cite[Section 4]{EKNO21}, it was outlined how Algorithm \ref{alg:cbc-dbd} can be efficiently implemented for the case of product weights, and 
it was shown that the computation time of such an implementation is of order $\calO (s N \log N)$. This makes the implementation competitive with 
the fast implementation of the classical CBC algorithm, as proposed by Nuyens and Cools, see, e.g., \cite{NC06, NC06b}, and also \cite{DKP22}.

In this section, we will outline that the CBC-DBD algorithm is also competitive for the case of 
so-called POD (product-and-order-dependent) weights, which occur in applications of QMC methods to PDEs with random coefficients, see, e.g., \cite{KN16}. 
Let us therefore assume that the weights $\bsgamma=\{\gamma_\setu\}_{\setu\subset \NN}$ are POD weights, i.e., they are of the form
\begin{equation}\label{eq:def_POD}
 \gamma_{\setu}=\Gamma_{\abs{\setu}} \prod_{j\in\setu} \gamma_j,\quad \mbox{for}\ \setu\subset\NN,
\end{equation}
where $(\gamma_j)_{j\ge 1}$ and $\Gamma_{\setu}$, $\setu\subset\NN$, are positive reals. We set $\Gamma_{0}=1$ and also treat the empty product as 
one, such that $\gamma_{\emptyset}=1$. It is known that there is a fast implementation of the classical CBC algorithm for POD weights, needing $\calO (s^2 N\log N)$ operations, see again \cite{DKP22}. We will now show that a similar estimate holds for a suitable implementation of Algorithm \ref{alg:cbc-dbd}. 
In the case of POD weights, we obtain from Definition \ref{def:h_rv},
\begin{eqnarray*}
 \lefteqn{h_{r,n,v,\bsgamma}(x) 
        :=
		\sum_{t=v}^{n} \frac{1}{2^{t-v}} \sum_{\substack{k=1 \\ k \equiv 1 \tpmod{2}}}^{2^t-1} 
		\left[ \sum_{\emptyset \ne \setu\subseteq \{1 \mcol r-1\}} \gamma_{\setu} 
		\prod_{j\in\setu} \log \left( \frac{1}{\sin^2(\pi k z_j / 2^t)} \right) \right.}\\ 
		&&+
		\sum_{\emptyset\neq\setu\subseteq \{1 \mcol r-1\}} \!\!\!\! \gamma_{\setu\cup \{r\}} 
		\log \left( \frac{1}{\sin^2(\pi k x / 2^v)} \right) \prod_{j\in\setu} \log \left( \frac{1}{\sin^2(\pi k z_j / 2^t)}\right) \\
		&&+ \left.\Gamma_1 \gamma_{r} \log \left( \frac{1}{\sin^2(\pi k x / 2^v)}\right)\vphantom{\sum_{\emptyset \ne \setu\subseteq \{1 \mcol r-1\}}}
		\right]\\
		&=& \sum_{t=v}^{n} \frac{1}{2^{t-v}} \sum_{\substack{k=1 \\ k \equiv 1 \tpmod{2}}}^{2^t-1} 
		\left[\vphantom{\left(\sum_{\substack{\setu\subseteq \{1:r-1\}\\ \abs{\setu}=\ell}}\right)}
		\Gamma_1 \gamma_{r} \log \left( \frac{1}{\sin^2(\pi k x / 2^v)}\right)\right.\\
		&&+\sum_{\ell=1}^{r-1}\left(\sum_{\substack{\setu\subseteq \{1:r-1\}\\ \abs{\setu}=\ell}} \Gamma_{\ell} 
		\prod_{j\in\setu}\gamma_j \log \left( \frac{1}{\sin^2(\pi k z_j / 2^t)}\right)\right.\\
		&&+\left.\left.\frac{\Gamma_{\ell+1}}{\Gamma_{\ell}} 
		\gamma_{r} \log \left( \frac{1}{\sin^2(\pi k x / 2^v)}\right)\sum_{\substack{\setu\subseteq \{1:r-1\}\\ \abs{\setu}=\ell}} \Gamma_{\ell} 
		\prod_{j\in\setu}\gamma_j \log \left( \frac{1}{\sin^2(\pi k z_j / 2^t)}\right)\right)\right]\\
		&=& \sum_{t=v}^{n} \frac{1}{2^{t-v}} \sum_{\substack{k=1 \\ k \equiv 1 \tpmod{2}}}^{2^t-1} 
		\left[\vphantom{\left(\sum_{\substack{\setu\subseteq \{1:r-1\}\\ \abs{\setu}=\ell}}\right)}
		\Gamma_1 \gamma_{r} \log \left( \frac{1}{\sin^2(\pi k x / 2^v)}\right)\right.\\
		&&+ \sum_{\ell=1}^{r-1}\sum_{\substack{\setu\subseteq \{1:r-1\}\\ \abs{\setu}=\ell}}
		\Gamma_{\ell} \prod_{j\in\setu}\gamma_j \log \left( \frac{1}{\sin^2(\pi k z_j / 2^t)}\right)\\
		&&\left.+ \gamma_r \log \left( \frac{1}{\sin^2(\pi k x / 2^v)}\right) 
		\sum_{\ell=1}^{r-1}\frac{\Gamma_{\ell+1}}{\Gamma_{\ell}}\sum_{\substack{\setu\subseteq \{1:r-1\}\\ \abs{\setu}=\ell}}
		\Gamma_{\ell} \prod_{j\in\setu}\gamma_j \log \left( \frac{1}{\sin^2(\pi k z_j / 2^t)}\right)\right].
\end{eqnarray*}
For short, we write, for $r\ge 2$, $\ell\in \{1,\ldots,r-1\}$, $t\in\{v,\ldots,n\}$, and odd $k\in \{1,\ldots, 2^t-1\}$,
\[
 p_{r-1,\ell,t}(k):=\sum_{\substack{\setu\subseteq \{1:r-1\}\\ \abs{\setu}=\ell}}
		\Gamma_{\ell} \prod_{j\in\setu}\gamma_j \log \left( \frac{1}{\sin^2(\pi k z_j / 2^t)}\right),
\]
and note that $p_{r-1,\ell,t}(k)$ is independent of $x$. Thus we can write
\begin{multline}\label{eq:quality_with_p}
 h_{r,n,v,\bsgamma}(x) = \sum_{t=v}^{n} \frac{1}{2^{t-v}} \sum_{\substack{k=1 \\ k \equiv 1 \tpmod{2}}}^{2^t-1} 
		\left[
		\Gamma_1 \gamma_{r} \log \left( \frac{1}{\sin^2(\pi k x / 2^v)}\right)\right.\\
		\left. + \sum_{\ell=1}^{r-1} p_{r-1,\ell,t} (k) + \gamma_r \log \left( \frac{1}{\sin^2(\pi k x / 2^v)}\right) 
		\sum_{\ell=1}^{r-1} \frac{\Gamma_{\ell+1}}{\Gamma_\ell} p_{r-1,\ell,t}(k)\right].
\end{multline}
Note that we have, for any $r\ge 2$, 
\begin{equation}\label{eq:p_recursion}
 p_{r,\ell,t}(k)=p_{r-1,\ell,t}(k) + \frac{\Gamma_{\ell}}{\Gamma_{\ell-1}} \gamma_r 
 \left(\log \left( \frac{1}{\sin^2(\pi k z_r / 2^t)}\right) \right) p_{r-1,\ell-1,t}(k),
\end{equation}
where we set
\begin{equation}\label{eq:initial_values}
 p_{r-1,\ell,t}(k)=\begin{cases}
                    1 & \mbox{if $\ell=0$,}\\
                    0 & \mbox{if $r-1<\ell$,}
                   \end{cases}
\end{equation}
for $t\in\{v,\ldots,n\}$ and odd $k\in\{1,2,\ldots,2^t-1\}$. 

Let
\[
P_{r,\ell,t}(k2^{n-t})=p_{r,\ell,t}(k) =\sum_{\substack{\setu\subseteq \{1:r\}\\ \abs{\setu}=\ell}}
		\Gamma_{\ell} \prod_{j\in\setu}\gamma_j \log \left( \frac{1}{\sin^2(\pi k z_j / 2^t)}\right)
\]
for $t\in\{2,\ldots,n\}$ and corresponding odd indices $k\in \{1,3,\ldots,2^t-1\}$. Note, furthermore, that for the 
evaluation of $h_{r,n,v,\bsgamma}$ we do not require the values of $P_{r,\ell,t}(k2^{n-t})=p_{r,\ell,t}(k)$ for $t\in\{2,\ldots,v-1\}$. 

Additionally, due to the way the $z_{r,v}$ are constructed in Algorithm \ref{alg:cbc-dbd}, we have that 
$z_{r,n} \tpmod{2^v}=z_{r,v}$ for $1\le v\le n$, and thus, by the periodicity of $\sin^2 (\pi x)$,
\[
 \sin^2\left(\pi \frac{k z_r}{2^v}\right)=\sin^2 \left(\pi \frac{k z_{r,n}\tpmod{2^v}}{2^v}\right)
 =\sin^2 \left(\pi \frac{k z_{r,v}}{2^v}\right).
\]
Hence, we can perform the update as in \eqref{eq:p_recursion} for $k\in\{1,3,\ldots,2^v-1\}$ with $z_{r}$ replaced by
$z_{r,v}$ immediately after each $z_{r,v}$ has been determined. 

These observations now lead to the following fast implementation of Algorithm \ref{alg:cbc-dbd}. 
\begin{algorithm}[H] 
	\caption{Fast CBC-DBD algorithm for POD weights}	
	\label{alg:fast-cbc-dbd}
	\vspace{5pt}
	\textbf{Input:} Integer $n \in \N$, dimension $s$, and positive POD weights $\bsgamma=\{\gamma_\setu\}_{\setu \subset \bbN}$. \\
	\vspace{-7pt}
		\begin{algorithmic}
			\FOR{$t=2$ \TO $n$}
			\FOR{$k=1$ \TO $2^t-1$ in steps of 2}
			\STATE Put $P_{1,1,t} (k 2^{n-t})=\Gamma_1 \gamma_1 \log \left(\frac{1}{\sin^2 (\pi k /2^t)}\right)$
			\ENDFOR
			\ENDFOR
			\vspace{5pt}
			\STATE Set $z_1=1$ and $z_{2,1}=\cdots = z_{s,1}=1$
			\FOR{$r=2$ \TO $s$}
			\FOR{$v=2$ \TO $n$}
            \STATE $z^{\ast} = \underset{z \in \{0,1\}}{\argmin} \; h_{r,n,v,\bsgamma} (z_{r,v-1}+ z2^{v-1})$, 
            where $h_{r,n,v,\bsgamma}$ is evaluated using \eqref{eq:quality_with_p}
            \STATE Set $z_{r,v}:=z_{r,v-1} + z^* 2^{v-1}$
            \FOR{$\ell=1$ \TO $r$}
            \FOR{$k=1$ \TO $2^{v}-1$ in steps of 2}
            \STATE $P_{r,\ell,v}(k 2^{n-v})=P_{r-1,\ell,v} (k 2^{n-v}) + \frac{\Gamma_{\ell}}{\Gamma_{\ell -1}} 
                    \log\left(\frac{1}{\sin^2 (\pi k z_{r,v} / 2^v)}\right)  P_{r-1,\ell-1,v}(k)$, where the initial values are 
                    defined analogously to \eqref{eq:initial_values}
            \ENDFOR
            \ENDFOR
			\ENDFOR
			\STATE Set $z_r:=z_{r,n}$
			\ENDFOR
		\end{algorithmic}
	\vspace{5pt}
	\textbf{Return:} Generating vector $\bsz = (z_1,\ldots,z_s)$ for $N=2^n$.
\end{algorithm}

Next, we show the following proposition, which implies that the computation time and required memory
of Algorithm \ref{alg:fast-cbc-dbd} are competitive with the classical fast CBC algorithm for POD weights. 
\begin{proposition}\label{prop:comp_time_POD}
 Let $n,s\in\NN$ and let $N=2^n$. For a given set of positive POD weights $\bsgamma=\{\gamma_\setu\}_{\setu \subset \bbN}$, 
 a generating vector $\bsz = (z_1,\ldots,z_s)$ can be computed via Algorithm \ref{alg:fast-cbc-dbd} using 
 $\calO (s^2 N \log N)$ operations and requiring $\calO (Ns)$ memory. 
\end{proposition}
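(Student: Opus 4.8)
The plan is a direct operation and storage count for Algorithm~\ref{alg:fast-cbc-dbd}, carried out under the usual convention that a single arithmetic operation and a single evaluation of $\sin$ or $\log$ each cost $\calO(1)$. I would split the bookkeeping into three parts: the initialisation loop, the main double loop over $r$ and $v$ (with its two sub-tasks, evaluating the quality function and updating the arrays $P_{r,\ell,t}$), and the memory footprint. For the initialisation, for each $t\in\{2,\ldots,n\}$ there are $2^{t-1}$ odd indices $k$, so filling the entries $P_{1,1,t}(k2^{n-t})$ costs $\sum_{t=2}^{n}2^{t-1}=N-2=\calO(N)$. I would also precompute once and store, for every $v$, the $2^{v-1}$ numbers $\log(1/\sin^{2}(\pi j/2^{v}))$ for odd $j<2^{v}$; this is again $\calO(N)$, and by the periodicity of $\sin^{2}(\pi\cdot)$ every factor $\log(1/\sin^{2}(\pi k x/2^{v}))$ occurring in \eqref{eq:quality_with_p} can then be obtained from the reduced index $kx\bmod 2^{v}$ in $\calO(1)$.

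For one iteration $(r,v)$ of the double loop I would bound the two sub-tasks separately. Evaluating $h_{r,n,v,\bsgamma}$ at a fixed argument via \eqref{eq:quality_with_p} requires, for each $t\in\{v,\ldots,n\}$ and each of the $2^{t-1}$ odd $k<2^{t}$, the two $\ell$-sums $\sum_{\ell=1}^{r-1}p_{r-1,\ell,t}(k)$ and $\sum_{\ell=1}^{r-1}(\Gamma_{\ell+1}/\Gamma_{\ell})\,p_{r-1,\ell,t}(k)$, each assembled in $\calO(r)$ operations from the stored values $P_{r-1,\ell,t}(k2^{n-t})$, with the remaining logarithmic factor costing $\calO(1)$ by the previous step; this gives $\sum_{t=v}^{n}\calO(r\,2^{t-1})=\calO(rN)$ per evaluation, hence $\calO(rN)$ for the two candidate arguments $z\in\{0,1\}$. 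Updating $P_{r,\ell,v}(k2^{n-v})$ through the recursion \eqref{eq:p_recursion}--\eqref{eq:initial_values} (with $z_{r}$ replaced by $z_{r,v}$, as justified in the discussion preceding the proposition) costs $\calO(1)$ per pair $(\ell,k)$ with $\ell\in\{1,\ldots,r\}$ and odd $k<2^{v}$, i.e.\ $\calO(r\,2^{v-1})=\calO(rN)$.

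Summing, one iteration costs $\calO(rN)$, so the double loop costs $\sum_{r=2}^{s}\sum_{v=2}^{n}\calO(rN)=\calO\!\big(nN\textstyle\sum_{r=2}^{s}r\big)=\calO(s^{2}N\log N)$, using $n=\log_{2}N$; together with the $\calO(N)$ initialisation this yields the claimed operation count, which matches that of the classical fast CBC algorithm for POD weights. For the memory bound I would note that the only storage whose size grows with $s$ is the collection of arrays $P_{\cdot,\ell,t}$: for each fixed $\ell\in\{1,\ldots,s\}$ the admissible indices $(t,k)$ (with $t\le n$, $k$ odd, $k<2^{t}$) number $\sum_{t=2}^{n}2^{t-1}=\calO(N)$, so $\calO(sN)$ in total; one may either keep the two consecutive generations $P_{r-1,\cdot,\cdot}$ and $P_{r,\cdot,\cdot}$ (still $\calO(sN)$) or, since \eqref{eq:p_recursion} involves only $P_{r-1,\ell,v}$ and $P_{r-1,\ell-1,v}$, overwrite level $v$ in place by running $\ell$ from $r$ down to $1$. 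The precomputed sine tables add $\calO(N)$ and the partial components $z_{r,v}$ together with the weights add $\calO(s+n)$, all dominated by $\calO(Ns)$.

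I do not expect a genuine obstacle here; the only points requiring a little care are the in-place bookkeeping just described and the observation, already recorded in the text before the proposition, that evaluating $h_{r,n,v,\bsgamma}$ needs $P_{r-1,\ell,t}$ only for $t\ge v$, so that the array entries overwritten at lower levels are never consulted. A secondary point worth stating explicitly is that the $\calO(rN)$ bound for one $h$-evaluation is governed by the term $t=n$ in \eqref{eq:quality_with_p}, since $\sum_{t=v}^{n}2^{t-1}\le 2^{n}=N$, so the naive evaluation of \eqref{eq:quality_with_p} already suffices and no further precomputation of the $\ell$-sums is needed to reach the stated complexity.
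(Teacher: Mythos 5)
Your proposal is correct and follows essentially the same route as the paper: a direct count of the cost of each evaluation of $h_{r,n,v,\bsgamma}$ via \eqref{eq:quality_with_p} (order $r\sum_{t=v}^{n}2^{t-1}=\calO(rN)$) plus the $\calO(r2^{v-1})$ update cost, summed over the loops to give $\calO(s^2N\log N)$, with $\calO(Ns)$ storage for the arrays $P_{\cdot,\ell,t}$. Your remarks on keeping only two generations (or overwriting level $v$ in place by running $\ell$ downward, using that $h_{r,n,v,\bsgamma}$ only consults levels $t\ge v$) spell out the memory bookkeeping more explicitly than the paper does, but this is a refinement of the same argument rather than a different approach.
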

\begin{proof}
Due to \eqref{eq:quality_with_p}, the cost of evaluating $h_{r,n,v,\bsgamma} (x)$ is of order
\[
  \sum_{t=v}^n 2^{t-1} (r-1).
\]
Updating the $P_{r,\ell,v}$ for $\ell=1,\ldots,r$ and odd $k=1,3,\ldots,2^{v-1}$ costs $\calO (r 2^{v-1})$ operations. 
Hence, the inner loop of Algorithm \ref{alg:fast-cbc-dbd} over $v=2,\ldots,n$ takes 
\begin{eqnarray*}
 \calO \left(\sum_{v=2}^n \left(\sum_{t=v}^n 2^{t-1} (r-1) + r 2^{v-1}\right)\right) &=&
 \calO \left(2^n (2+ n (r-1)-r)\right)\\
 &=& \calO (r N \log N)
\end{eqnarray*}
operations. Consequently, the outer loop over $r=2,\ldots,s$ takes $\calO (s^2 N \log N)$ operations. 

Regarding storage, initializing $P_{1,1,t}$ costs $\calO (N)$ memory. Then, updating $P_{r,\ell,v} (k 2^{n-v})$ costs at most $\calO (N s)$ memory.

\end{proof}

\subsection*{Acknowledgments}

The author is supported by the Austrian Science Fund, Project F5506, 
which is part of the Special Research Program ``Quasi-Monte Carlo Methods: Theory and Applications'', and Project P34808. 
Moreover, the author would like to thank an anonymous referee for their suggestions on how to improve the presentation of the results.
For the purpose of open access, the author has applied a CC BY public copyright licence to any author accepted manuscript version arising from this submission.

\begin{small}
	\noindent\textbf{Author's address:}\\

	\noindent Peter Kritzer\\
	Johann Radon Institute for Computational and Applied Mathematics (RICAM)\\
	Austrian Academy of Sciences\\
	Altenbergerstr. 69, 4040 Linz, Austria.\\
	\texttt{peter.kritzer@oeaw.ac.at}

\end{small}

\end{document}